\newtheorem{theorem}{Theorem}[section]    % 定义定理环境，按section编号
\newtheorem{lemma}[theorem]{Lemma}        % 定义引理环境，跟随theorem编号
\newtheorem{proposition}[theorem]{Proposition} % 命题环境
\newtheorem{assumption}{Assumption}[section]
\newtheorem{condition}{Condition}[section]
\begin{document}
\title{Accelerating Level-Value Adjustment for the Polyak Stepsize}

\begingroup
\small

\author{
Anbang Liu\thanks{Center for Intelligent and Networked Systems (CFINS), Tsinghua University, Beijing 100084, China. Email: liuab19@mails.tsinghua.edu.cn}
\and
Mikhail A. Bragin\thanks{University of Connecticut, Storrs, CT 06269, USA. Email: mikhail.bragin@uconn.edu}
\and
Xi Chen\thanks{Center for Intelligent and Networked Systems (CFINS), Tsinghua University, Beijing 100084, China. Email: bjchenxi@mail.tsinghua.edu.cn}
\and
Xiaohong Guan\thanks{Center for Intelligent and Networked Systems (CFINS), Tsinghua University, Beijing 100084, China; MOEKLINNS Laboratory, Xi'an Jiaotong University, Xi'an, Shaanxi, China. Email: xhguan@xjtu.edu.cn}
}

\date{2025.6}

\endgroup

\maketitle

\begin{abstract} 

The Polyak stepsize has been widely used in subgradient methods for non-smooth convex optimization. However, calculating the stepsize requires the optimal value, which is generally unknown. Therefore, dynamic estimations of the optimal value are usually needed. In this paper, to guarantee convergence, a series of level values is constructed to estimate the optimal value successively. This is achieved by developing a decision-guided procedure that involves solving a novel, easy-to-solve linear constraint satisfaction problem referred to as the ``Polyak Stepsize Violation Detector'' (PSVD). Once a violation is detected, the level value is recalculated. We rigorously establish the convergence for both the level values and the objective function values. Furthermore, with our level adjustment approach, calculating an approximate subgradient in each iteration is sufficient for convergence. A series of empirical tests of convex optimization problems with diverse characteristics demonstrates the practical advantages of our approach over existing methods.

\noindent \textbf{Keywords: convex optimization; non-smooth optimization; polyak stepsize; subgradient method; approximate subgradient method}
\end{abstract}

\section{Introduction}

This paper focuses on subgradient methods for the following problem: 
\begin{equation}
\begin{aligned}
& \underset{x\in \mathbb{R}^n}{\min}
& & f(x),
& \text{s.t.}
& & x \in \mathcal{X},
\label{convex optimization problem}
\end{aligned}
\end{equation}
where $f: \mathbb{R}^n\rightarrow\mathbb{R}$ is a convex, non-smooth objective function, and $\mathcal{X}$ is a closed, nonempty, and convex subset of $\mathbb{R}^n$. The optimal value is denoted as $f^{\star} := \inf _{x \in \mathcal{X}} f(x)$ and is assumed to be bounded below. The set of optimal solutions is denoted as $\mathcal{X}^{\star}:=\left\{x \in \mathcal{X}:f(x)=f^{\star} \right\}$ and is assumed to be non-empty. Problem \eqref{convex optimization problem} appears in several contexts such as Lagrangian Relaxation, where non-smooth convex dual functions are optimized \cite{zhao1999surrogate,nedic2001incremental,bragin2015convergence, bragin2022surrogate}, approximation and fitting \cite{osborne2000lasso,boyd2004convex}, and machine learning  \cite{pisner2020support,lim2012consistency}. The objective function of \eqref{convex optimization problem} is non-smooth; thus, gradients may not exist at certain points. To optimize \eqref{convex optimization problem}, subgradient methods have been used, which involve taking a series of steps along subgradient directions as:
\begin{equation}
\label{update equation}
    x^{k+1}=P_{\mathcal{X}}\left(x^k-s^k \cdot g^k\right),
\end{equation}
where $P_{\mathcal{X}}$ is the projection onto $\mathcal{X}$, $s^{k}$ is a positive stepsize and $g^{k}$ is the subgradient of $f$ at $x^{k}$ that satisfies
\begin{equation} 
\label{subgradient inequality}
    f(x) \geq f(x^k) + \big(g^k\big)^{\top} (x - x^k), \forall x \in \mathcal{X}.
\end{equation}
In this paper, we assume that the subgradients are bounded:
\begin{assumption} 
\label{Assumption_1}
\noindent The subgradients of $f$ are bounded, i.e., there exists a scalar $C$, such that
\begin{equation}
\label{subgradient boundness assumption}
    \|g^k\|\leq C, \quad \forall k=0, 1, \dots.
\end{equation}
\end{assumption} 
In \eqref{subgradient boundness assumption} and thereafter in the paper, $\|\cdot\|$ denotes the standard Euclidean norm. Assumption \ref{Assumption_1} is satisfied if, e.g., $f$ satisfies the Lipschitz condition \cite{nesterov2018lectures}. 

The stepsize rules used in subgradient methods are significantly different from those in gradient methods, and have a substantial impact on convergence rates. The adaptive stepsize introduced by Polyak in \cite{polyak1969minimization} has been widely used \cite{goffin1999convergence, nedic2001incremental,orvieto2022dynamics}. The key idea behind it is as follows. From the non-expansive property of the projection $P_{\mathcal{X}}$ and inequality \eqref{subgradient inequality}, a classical result in subgradient methods can be established: 
\begin{equation}
\label{fundamental result}
   \left\|x^{k+1}-x^{\star}\right\|^2-\left\|x^k-x^{\star}\right\|^2 \leq -2 s^k\left(f\left(x^k\right)-f^{\star}\right)+\left(s^k\right)^2\left\|g^k\right\|^2, 
\end{equation}
with $x^{\star}\in\mathcal{X}^{\star}$. With the Polyak stepsize 
\begin{equation}\label{polyak stepsize with optimal obj}
    s^k = \gamma \cdot \frac{f(x^k) - f^{\star}}{\big\|g^k\big\|^2}, \quad 0 < \gamma < \bar{\gamma}<2,
\end{equation} 
the right-hand side of \eqref{fundamental result} is negative, guaranteeing that the sequence $\{x^k\}$ strictly approaches some optimal solution.  Moreover, under additional requirements on ``sharpness” of $f$ and Lipschitz conditions, a linear convergence rate can be established as $\|x^{k+1}-x^{\star}\| \leq r \cdot \|x^{k}-x^{\star}\|$ with $0<r < 1$ (see \cite{polyak1969minimization, polyak1978subgradient} for details).

A primary challenge in employing the Polyak stepsize \eqref{polyak stepsize with optimal obj} is that, in most cases, the optimal value $f^{\star}$ is unknown. To overcome this difficulty, $f^{\star}$ is typically replaced with a level value $\bar{f}_k$ (an estimate of $f^{\star}$) as:
\begin{equation}
    s^k=\gamma \cdot \frac{f\left(x^k\right)-\bar{f}_k}{\left\|g^k\right\|^2}, \quad 0 <  \gamma < \bar{\gamma}<2.
    \label{polyak stepsize with estimation}
\end{equation}
However, the estimation inaccuracy of $\bar{f}_k$ can introduce two critical issues: a) when $\bar{f}_k$ is set ``too high,'' the stepsize becomes excessively small, slowing convergence and potentially leading to premature termination; b) conversely, if $\bar{f}_k$ is set ``too low,'' the stepsize remains excessively large, which can cause oscillations or even divergence. To address these issues, timely and adaptive adjustments of $\bar{f}_k$ are required. 

In the following, two level adjustment approaches, the path-based level adjustment \cite{goffin1999convergence} and the decision-guided level adjustment \cite{bragin2022surrogate}, are reviewed.  In the path-based level adjustment approach \cite{goffin1999convergence}, the level value $\bar{f}_k$ is offset by the best objective function value obtained up to iteration $k$ with a positive parameter $\delta_k$. An oscillation is detected when the objective function value fails to decrease sufficiently and the solution path exceeds a predefined threshold $B$; in such case, $\delta_k$ is reduced. However, the selection of hyperparameters such as $B$ (path length threshold) and $\delta_0$ (initial offsetting hyperparameter) is problem-specific and may affect performance in a major way. 

Originally developed in \cite{bragin2022surrogate}, the decision-guided level adjustment approach does not need hyperparameter tuning. The approach sets a level value that underestimates $f^{\star}$. Once a solution divergence detection mechanism is triggered, the level value is recalculated, which ensures a tighter level value. The specifics of the method are provided ahead.  
Suppose $x^{k}$ is updated per \eqref{update equation} with $s^{k}$ calculated based on \eqref{polyak stepsize with estimation} with a level value of $\bar{f}_{k}$. If the level value is ``appropriate," such that $s^{k}$ satisfies the Polyak formula \eqref{polyak stepsize with optimal obj}, then $x^{k+1}$ gets closer to $x^{\star} \in \mathcal{X}^{\star}$ as  
\begin{equation}
    \left\|x^{\star}-x^{k+1}\right\|^2 \leq \left\|x^{\star}-x^{k}\right\|^2.
    \label{sdd temp}
\end{equation}
Conversely, if \eqref{sdd temp} is violated by $x^{k}$ and $x^{k+1}$, then the stepsize is ``too large'' such that the Polyak formula \eqref{polyak stepsize with optimal obj} is violated, and an adjustment of the level value is required. To operationalize this idea, the 
% ``Solution-Divergence Detector" (SDD) is formulated as a 
following constraint satisfaction problem is formulated and serves to trigger the level adjustment:  
\begin{equation}
    \left\{\begin{array}{c}\begin{aligned}
    \left\|x-x^{k(j)+1}\right\|^2 & \leq\left\|x-x^{k(j)}\right\|^2, \\
    \left\|x-x^{k(j)+2}\right\|^2 & \leq\left\|x-x^{k(j)+1}\right\|^2, \\
    & \vdots \\
    \left\|x-x^{k(j)+\eta}\right\|^2 & \leq\left\|x-x^{k(j)+\eta-1}\right\|^2,\\
    \end{aligned}\end{array}\right. \label{divergencedetection}
\end{equation} 
where $x \in \mathbb{R}^n$ is the decision vector, and $\{x^k\}_{k=k(j)}^{k(j)+\eta}$ is the sequence of solutions obtained after the $j$-th level adjustment, which occurs at iteration $k(j)$. 
In \cite{bragin2022surrogate}, the solutions represent the multipliers in Lagrangian dual problems, and the problem is termed a Multiplier-Divergence Detector (MDD) problem. More generally, this is referred to as the ``Solution-Divergence Detector" (SDD) problem throughout the remainder of this paper. The infeasibility of problem~\eqref{divergencedetection} indicates that $\{x^k\}_{k = k(j)}^{k(j) + \eta}$ is not approaching any solution, including $x^{\star}$, and therefore triggers a level adjustment (see~\cite[Eq.~(20)]{bragin2022surrogate} for details). If the SDD problem \eqref{divergencedetection} is feasible, a new inequality $\left\|x-x^{k(j)+\eta+1}\right\|^2 \leq \left\|x-x^{k(j)+\eta}\right\|^2$ is appended to \eqref{divergencedetection}, and feasibility is checked again in the next iteration. 

The primary advantage of the decision-guided procedure lies in its problem-agnostic design and the elimination of hyperparameter tuning. This is particularly valuable in applications where hyperparameter fine-tuning, often reliant on domain knowledge, is time-consuming. By reducing the reliance on such adjustments, the approach becomes more versatile and user-friendly, especially for time-sensitive or large-scale problems (e.g., Lagrangian duals of practical-size discrete programs) where fine-tuning is either impractical or may lead to suboptimal performance.

However, there are three major difficulties. First, while the infeasibility of \eqref{divergencedetection} indicates that $\{x^k\}_{k=k(j)}^{k(j)+\eta}$ does not approach $x^{\star}$, the feasibility of \eqref{divergencedetection} cannot guarantee that $\{x^k\}_{k=k(j)}^{k(j)+\eta}$ gets closer to $x^{\star}$. Consequently, when a level value has not been appropriate at an iteration, the algorithm may still require multiple additional iterations before adjusting the level value. Second, the convergence of the approach was not established in \cite{bragin2022surrogate}. Third, the SDD problem \eqref{divergencedetection} involves quadratic terms, and thus cannot be equivalently transformed to a linear one when the problem solved includes constraints. The feasibility check may therefore be computationally expensive with limited scalability.

In this paper, inspired by \cite{bragin2022surrogate}, we develop a novel decision-guided level adjustment approach, which overcomes the major difficulties outlined above. Our key contributions are as follows. First, we introduce the ``Polyak Stepsize Violation Detector" (PSVD) problem, which triggers level adjustment. The PSVD problem is shown to be tighter than the SDD problem \eqref{divergencedetection}, and thus the level adjustment of \cite{bragin2022surrogate} is accelerated. Second, we rigorously prove the convergence of subgradient methods when using the Polyak stepsize and the PSVD-based level adjustment approach. Third, the PSVD problem is a linear constraint satisfaction problem, enabling efficient feasibility checks. Fourth, we extend the results to approximate subgradient methods. Specifically, we show that, with our PSVD-based level adjustment approach, it is unnecessary to compute an exact subgradient at every iteration—an approximate subgradient suffices to ensure convergence. 

This paper is organized as follows. In Section \ref{section 2},  we propose the PSVD-based level adjustment approach for subgradient methods. In Section \ref{section 3}, we propose the level adjustment approach for approximate subgradient methods. In Section \ref{section 4}, we empirically demonstrate the advantages of our approach on a variety of convex problems. In Section \ref{section 5}, we give the conclusion, summarize the limitations of the method, and discuss the directions for future research.

\section{Polyak Stepsize with Accelerated Decision\\-Guided Level Adjustment for Subgradient \\Methods} \label{section 2} 

In this section, we develop the Polyak Stepsize with Accelerated Decision-Guided Level Adjustment (PSADLA for short) for subgradient methods. Rather than detecting solution divergence as in \cite{bragin2022surrogate}, our level-adjustment approach directly detects Polyak stepsize violations directly using a novel ``Polyak Stepsize Violation Detector" (PSVD) problem. In Subsection~\ref{subsection 2.1}, we highlight the key ideas behind the PSVD problem and the level-adjustment formula, followed by the pseudo-code of the algorithm. In Subsection~\ref{subsec_ituitive_explanation}, we provide an intuitive explanation of our method using schematic diagrams. In Subsection~\ref{subsection 2.2}, we rigorously prove the convergence of the proposed approach.

\subsection{Polyak Stepsize Violation Detector Problem for Level Adjustment} 
\label{subsection 2.1}
As discussed in the Introduction, the Polyak stepsize \eqref{polyak stepsize with optimal obj} exhibits favorable convergence properties. However, these properties may not hold when the optimal value is inaccurately estimated using a level value. In this subsection, we introduce a framework that identifies such inaccuracies through a novel decision-guided approach. We begin by presenting the core concepts in Lemmas \ref{first_lemma_introducing_idea} and \ref{second_lemma_introducing_idea}.

\begin{lemma}
\label{first_lemma_introducing_idea}
For $x^k\in \mathcal{X}$ and subgradient $g^k$ at $x^k$, if a stepsize $s^{k}$ does not satisfy the following inequality:
\begin{equation}
    \left(g^{k}\right)^{\top} x^{\star} \leq\left(g^{k}\right)^{\top} x^{k}- \frac{1}{\bar{\gamma}} s^{k}\left\|g^{k}\right\|^2,
    \label{first_lemma_introducing_idea-1}
\end{equation}
where $x^{\star} \in \mathcal{X}^{\star}$, then 
\begin{equation}
    s^k > \bar{\gamma} \cdot \frac{f\left(x^k\right)-f^{\star}}{\left\|g^k\right\|^2}.
    \label{first_lemma_introducing_idea-2}
\end{equation}
\end{lemma}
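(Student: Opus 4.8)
The plan is to establish the implication by contraposition-style chaining of two inequalities: the defining subgradient inequality \eqref{subgradient inequality} evaluated at an optimal point, and the negation of the hypothesis \eqref{first_lemma_introducing_idea-1}. The whole argument is a short algebraic manipulation, so the work lies in lining up the two bounds correctly rather than in any structural difficulty.

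First I would instantiate \eqref{subgradient inequality} at $x = x^{\star} \in \mathcal{X}^{\star}$. Since $f(x^{\star}) = f^{\star}$ by the definition of $\mathcal{X}^{\star}$, this gives $f^{\star} \geq f(x^k) + (g^k)^{\top}(x^{\star} - x^k)$, which I would rearrange to isolate the linear term, obtaining the upper bound
\[
  \big(g^k\big)^{\top} x^{\star} \leq \big(g^k\big)^{\top} x^k - \big(f(x^k) - f^{\star}\big).
\]
This bound is a pure consequence of convexity and holds regardless of the stepsize. Next I would invoke the hypothesis that \eqref{first_lemma_introducing_idea-1} fails, i.e.
\[
  \big(g^k\big)^{\top} x^{\star} > \big(g^k\big)^{\top} x^k - \tfrac{1}{\bar{\gamma}}\, s^k \big\|g^k\big\|^2 .
\]

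Chaining this strict lower bound with the convexity upper bound and cancelling the common term $(g^k)^{\top} x^k$ leaves $-\tfrac{1}{\bar{\gamma}} s^k \|g^k\|^2 < -\big(f(x^k) - f^{\star}\big)$. Multiplying through by $-\bar{\gamma}/\|g^k\|^2$, which is negative and therefore reverses the inequality, yields precisely \eqref{first_lemma_introducing_idea-2}. I would also dispose of the degenerate case $g^k = 0$ at the outset, since the division requires $\|g^k\| \neq 0$; but if $g^k = 0$ then $x^k$ is already optimal and the claim is vacuous.

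The only point demanding care is the bookkeeping of inequality directions: the hypothesis enters as a negation of \eqref{first_lemma_introducing_idea-1} (flipping the sense once), and the final normalization by the negative factor $-\bar{\gamma}/\|g^k\|^2$ flips it a second time, so that the two reversals must be tracked consistently to land on the strict inequality \eqref{first_lemma_introducing_idea-2} rather than its reverse. I expect this sign-tracking to be the main—indeed the only—place where an error could creep in.
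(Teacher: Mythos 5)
Your proof is correct and follows essentially the same route as the paper's: negate \eqref{first_lemma_introducing_idea-1}, combine with the subgradient inequality \eqref{subgradient inequality} evaluated at $x^{\star}$ (using $f(x^{\star})=f^{\star}$), and rearrange—only the order in which the two inequalities are invoked differs. Your extra remark on the degenerate case $g^k=0$ (where the hypothesis cannot hold, so the lemma is vacuous) is a harmless addition the paper omits.
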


\begin{proof}
    From the violation of  \eqref{first_lemma_introducing_idea-1}, it follows that
    \begin{equation}
         s^{k}> \frac{\bar{\gamma}\cdot \left(g^{k}\right)^{\top} (x^{k}-x^{\star})}{\left\|g^{k}\right\|^2}.
    \end{equation}
Together with \eqref{subgradient inequality}, we have \eqref{first_lemma_introducing_idea-2}. The proof is complete.
\end{proof} 

The above lemma indicates that inequality \eqref{first_lemma_introducing_idea-1} can be used to detect whether a stepsize exceeds $\bar{\gamma} (f(x^k) - f^{\star})/\|g^k\|^2$. Therefore, based on the above lemma, inequality \eqref{first_lemma_introducing_idea-1} serves as a test to determine whether the level value is inappropriately low and can be used to trigger a level adjustment. In the following lemma, we show that a new level value can be constructed whenever inequality \eqref{first_lemma_introducing_idea-1} is violated.

\begin{lemma}
\label{second_lemma_introducing_idea}
Suppose that at iteration $k$, $x^{k}$ is the current solution, $g^{k}$ is a subgradient of $f$ at $x^{k}$, and the stepsize $s^k$ is computed using a level value $\bar{f}_k$ (with $\bar{f}_k<f^{\star}$) according to \eqref{polyak stepsize with estimation}. If inequality \eqref{first_lemma_introducing_idea-1} is violated, then a new level value $\bar{f}^{\prime}$ can be generated as
\begin{equation} 
    \bar{f}^{\prime}=\frac{\gamma}{\bar{\gamma}} \bar{f_k}+\left(1-\frac{\gamma}{\bar{\gamma}}\right) f\left(x^k\right),
\label{second_lemma_introducing_idea-1}
\end{equation}
which serves as a tighter lower bound on $f^{\star}$, satisfying
\begin{equation}
    \bar{f_k} < \bar{f}^{\prime} < f^{\star}.
\end{equation}
\end{lemma}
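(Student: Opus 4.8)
The plan is to treat the two inequalities $\bar{f}_k < \bar{f}'$ and $\bar{f}' < f^{\star}$ separately, since they rest on different facts: the lower bound follows purely from the algebraic structure of $\bar{f}'$ as a convex combination, whereas the upper bound is exactly where the hypothesis (the violation of \eqref{first_lemma_introducing_idea-1}) enters, through Lemma \ref{first_lemma_introducing_idea}.

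For the lower bound I would observe that, because $0 < \gamma < \bar{\gamma}$, the weight $\gamma/\bar{\gamma}$ lies strictly in $(0,1)$, so that \eqref{second_lemma_introducing_idea-1} expresses $\bar{f}'$ as a strict convex combination of $\bar{f}_k$ and $f(x^k)$. Since $f^{\star}$ is the infimum and $x^k \in \mathcal{X}$, we always have $f(x^k) \geq f^{\star}$; combining this with the standing hypothesis $\bar{f}_k < f^{\star}$ gives $f(x^k) \geq f^{\star} > \bar{f}_k$, hence $f(x^k) > \bar{f}_k$. A strict convex combination of two distinct numbers lies strictly between them, which immediately yields $\bar{f}_k < \bar{f}' < f(x^k)$ and in particular the desired $\bar{f}_k < \bar{f}'$.

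For the upper bound I would first invoke Lemma \ref{first_lemma_introducing_idea}: since \eqref{first_lemma_introducing_idea-1} is violated, its conclusion \eqref{first_lemma_introducing_idea-2} gives $s^k > \bar{\gamma}\,(f(x^k) - f^{\star})/\|g^k\|^2$. I would then substitute the definition of the stepsize from \eqref{polyak stepsize with estimation}, namely $s^k = \gamma\,(f(x^k) - \bar{f}_k)/\|g^k\|^2$, and cancel the common positive factor $\|g^k\|^2$ to obtain $\gamma\,(f(x^k) - \bar{f}_k) > \bar{\gamma}\,(f(x^k) - f^{\star})$. Dividing by $\bar{\gamma} > 0$ and solving for $f^{\star}$ yields $f^{\star} > f(x^k) - (\gamma/\bar{\gamma})(f(x^k) - \bar{f}_k)$, and the right-hand side is precisely $\bar{f}'$ as defined in \eqref{second_lemma_introducing_idea-1}. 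This establishes $\bar{f}' < f^{\star}$ and completes the chain $\bar{f}_k < \bar{f}' < f^{\star}$.

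The argument reduces to a short algebraic rearrangement once Lemma \ref{first_lemma_introducing_idea} is in hand, so I do not anticipate a genuine obstacle; the only points requiring care are the direction and strictness of the inequalities—verifying that dividing by the strictly positive quantities $\|g^k\|^2$ and $\bar{\gamma}$ preserves the strict inequality—and recognizing that the closed form $\bar{f}'$ is \emph{exactly} the threshold obtained by solving the Lemma \ref{first_lemma_introducing_idea} inequality for $f^{\star}$, not merely some lower bound on it. In effect, the update \eqref{second_lemma_introducing_idea-1} is reverse-engineered from the violation condition, which is why the resulting bound is tight.
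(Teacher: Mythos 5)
Your proposal is correct and follows essentially the same route as the paper: the lower bound via the strict convex combination of $\bar{f}_k$ and $f(x^k)$ (your explicit justification $f(x^k)\geq f^{\star}>\bar{f}_k$ is a small but welcome addition the paper leaves implicit), and the upper bound by invoking Lemma \ref{first_lemma_introducing_idea}, substituting the stepsize \eqref{polyak stepsize with estimation} into \eqref{first_lemma_introducing_idea-2}, canceling $\|g^k\|^2$, and solving for $f^{\star}$.
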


\begin{proof}
\noindent From \eqref{second_lemma_introducing_idea-1}, $\bar{f}^{\prime}$ is a convex combination of $\bar{f}_k$ and $f\left(x^k\right)$. Since $f(x^k) > \bar{f}_k$, it follows directly that $\bar{f}^{\prime} > \bar{f}_k$. 

By Lemma \ref{first_lemma_introducing_idea}, the violation of \eqref{first_lemma_introducing_idea-1} implies that \eqref{first_lemma_introducing_idea-2} holds. Substituting the stepsize from \eqref{polyak stepsize with estimation} into \eqref{first_lemma_introducing_idea-2}, we obtain:
\begin{equation}
    \gamma \cdot \frac{f\left(x^k\right)-\bar{f}_k}{\left\|g^k\right\|^2} > \bar{\gamma} \cdot \frac{f\left(x^k\right)-f^{\star}}{\left\|g^k\right\|^2}.
\end{equation}
Canceling $\|g^k\|^2$ and rearranging terms yields:
\begin{equation}
    f^{\star} > f\left(x^k\right)-\frac{\gamma}{\bar{\gamma}} f\left(x^k\right)+\frac{\gamma}{\bar{\gamma}} \bar{f}_k,
\end{equation}
and the right-hand side is exactly $\bar{f}^{\prime}$. Thus, we have $\bar{f}^{\prime} < f^{\star}$. The proof is complete. 
\end{proof}

The above two lemmas demonstrate that if a violation of \eqref{first_lemma_introducing_idea-1} is detected at iteration $k$, then the stepsize exceeds $\bar{\gamma} (f(x^k) - f^{\star})/\|g^k\|^2$, and a new level value is generated using the convex combination \eqref{second_lemma_introducing_idea-1}. However, directly checking \eqref{first_lemma_introducing_idea-1} is challenging since $x^{\star}$ is unknown. In the following theorem, we propose a novel ``Polyak Stepsize Violation Detector" (PSVD) constraint satisfaction problem that circumvents the need for $x^{\star}$.
Specifically, at a given iteration, the problem formulation only requires solutions, subgradients, and stepsizes from previous iterations. We show that the infeasibility of the problem implies the violation of \eqref{first_lemma_introducing_idea-1} and therefore triggers a level adjustment. 

% Again, $k(j)$ denotes the iteration number of the $j$-th level adjustment. 

\begin{theorem}
\label{Theorem 2.3}
Suppose that, from iteration $k(j)$ through iteration $k(j)+\eta$, the level values remain constant and underestimate $f^{\star}$, and $\{x^k\}_{k=k(j)}^{k(j)+\eta}$ is generated iteratively using subgradients $\{g^k\}_{k=k(j)}^{k(j)+\eta}$ and stepsizes $\{s^k\}_{k=k(j)}^{k(j)+\eta}$. Consider the following PSVD problem formulated as a constraint satisfaction problem with decision vector $x \in \mathbb{R}^n$:
\begin{equation}
\label{PSVD problem}
    \left\{\begin{array}{c} \begin{aligned}
    \left(g^{k(j)}\right)^{\top} x & \leq\left(g^{k(j)}\right)^{\top} x^{k(j)}-\frac{1}{\bar{\gamma}} s^{k(j)}\left\|g^{k(j)}\right\|^2, \\
    \left(g^{k(j)+1}\right)^{\top} x & \leq\left(g^{k(j)+1}\right)^{\top} x^{k(j)+1}-\frac{1}{\bar{\gamma}} s^{k(j)+1}\left\|g^{k(j)+1}\right\|^2, \\
    & \vdots \\
    \left(g^{k(j)+\eta}\right)^{\top} x & \leq\left(g^{k(j)+\eta}\right)^{\top} x^{k(j)+\eta}-\frac{1}{\bar{\gamma}} s^{k(j)+\eta}\left\|g^{k(j)+\eta}\right\|^2. 
\end{aligned}\end{array}\right.\end{equation}
If \eqref{PSVD problem} is infeasible, then there exists $\kappa \in \{k(j), k(j)+1, \dots, k(j)+\eta\}$, such that
\begin{equation}
    s^\kappa > \bar{\gamma} \cdot \frac{f\left(x^\kappa\right)-f^{\star}}{\left\|g^\kappa\right\|^2},\label{T2.3_1}
\end{equation}
and a new level value $\bar{f}^{\prime}$ can be computed as
\begin{equation}
    \bar{f}^{\prime}=\frac{\gamma}{\bar{\gamma}} \bar{f}_{k(j)+\eta}+\left(1-\frac{\gamma}{\bar{\gamma}}\right) \min _{\kappa \in\{k(j), k(j)+1, \dots, k(j)+\eta\}}f\left(x^\kappa\right),
\label{level updating for subgradient method}
\end{equation}
which is a tighter level as compared to the previous level value $\bar{f}_{k(j)+\eta}$, i.e., 
\begin{equation}
    \bar{f}_{k(j)+\eta} < \bar{f}^{\prime} <  f^{\star}.
\end{equation}
\end{theorem}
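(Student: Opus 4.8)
The plan is to exploit the observation that the PSVD problem \eqref{PSVD problem} is nothing more than inequality \eqref{first_lemma_introducing_idea-1} of Lemma \ref{first_lemma_introducing_idea}, rewritten with the unknown decision vector $x$ in place of the optimal point $x^\star$ and imposed simultaneously for every iteration $\kappa \in \{k(j),\dots,k(j)+\eta\}$. Since $x^\star \in \mathcal{X}^\star \subseteq \mathbb{R}^n$ is itself an admissible value of the decision vector, it is always a candidate solution of \eqref{PSVD problem}. This gives the backbone of the argument via the contrapositive: if $x^\star$ were to satisfy all $\eta+1$ constraints, then $x^\star$ would be feasible and \eqref{PSVD problem} would be feasible. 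Hence infeasibility of \eqref{PSVD problem} forces at least one constraint to fail at $x=x^\star$, i.e., there exists an index $\kappa$ for which \eqref{first_lemma_introducing_idea-1} is violated.

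Once such a $\kappa$ is isolated, I would invoke Lemma \ref{first_lemma_introducing_idea} directly: the violation of \eqref{first_lemma_introducing_idea-1} at iteration $\kappa$ yields \eqref{T2.3_1}, which is the first assertion of the theorem.

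For the level update, I would apply Lemma \ref{second_lemma_introducing_idea} to this same $\kappa$. Because the level value is constant across the window, $\bar{f}_\kappa = \bar{f}_{k(j)+\eta}$, so the lemma delivers $\tfrac{\gamma}{\bar{\gamma}}\bar{f}_{k(j)+\eta} + \big(1-\tfrac{\gamma}{\bar{\gamma}}\big) f(x^\kappa) < f^\star$. The one nonroutine point is that the algorithm cannot identify which $\kappa$ is the violating index, so the constructed level \eqref{level updating for subgradient method} uses $\min_\kappa f(x^\kappa)$ rather than the specific $f(x^\kappa)$. I would close the upper bound $\bar{f}' < f^\star$ by noting that $1-\tfrac{\gamma}{\bar{\gamma}}>0$ (as $\gamma<\bar{\gamma}$) and $\min_\kappa f(x^\kappa)\le f(x^\kappa)$, so replacing $f(x^\kappa)$ by the smaller minimum only lowers the right-hand side while preserving the strict inequality inherited from the violating index. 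For the lower bound $\bar{f}' > \bar{f}_{k(j)+\eta}$, I would use that every iterate satisfies $f(x^\kappa)\ge f^\star > \bar{f}_{k(j)+\eta}$, whence $\min_\kappa f(x^\kappa) > \bar{f}_{k(j)+\eta}$, making \eqref{level updating for subgradient method} a convex combination of $\bar{f}_{k(j)+\eta}$ and a strictly larger quantity.

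The main obstacle — really the single delicate step — is the sign bookkeeping in the substitution of the minimum: the factor $1-\tfrac{\gamma}{\bar{\gamma}}$ must be positive so that lowering $f(x^\kappa)$ to its minimum keeps $\bar{f}'<f^\star$, while simultaneously the minimum must dominate $\bar{f}_{k(j)+\eta}$ for the lower bound. Everything else reduces to recognizing PSVD as the per-iteration Lemma-\ref{first_lemma_introducing_idea} test evaluated at the unknown but always-admissible certificate $x^\star$.
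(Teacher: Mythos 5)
Your proposal is correct and follows essentially the same route as the paper's proof: infeasibility of \eqref{PSVD problem} means the always-admissible certificate $x^{\star}$ violates some constraint, Lemma \ref{first_lemma_introducing_idea} then yields \eqref{T2.3_1}, Lemma \ref{second_lemma_introducing_idea} gives the strict upper bound for the violating index, and the positivity of $1-\gamma/\bar{\gamma}$ lets you pass to the minimum while the convex-combination structure with $\min_{\kappa} f(x^{\kappa}) > \bar{f}_{k(j)+\eta}$ gives the lower bound. Your explicit justification of that last inequality via $f(x^{\kappa}) \geq f^{\star} > \bar{f}_{k(j)+\eta}$ is a slightly more detailed version of the step the paper simply asserts.
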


\begin{proof} 
Since \eqref{PSVD problem} admits no feasible solution,  $x^{\star}\in\mathcal{X}^{\star}$ is not feasible to \eqref{PSVD problem}. Therefore, there exists $\kappa \in \{k(j), k(j)+1, \dots, k(j)+\eta\}$ such that the following inequality holds:
\begin{equation}
\left(g^{\kappa}\right)^{\top} x^{\star} > \left(g^{\kappa}\right)^{\top} x^{\kappa}- \frac{1}{\bar{\gamma}} s^{\kappa}\left\|g^{\kappa}\right\|^2.
\end{equation}
By Lemma \ref{first_lemma_introducing_idea}, this implies  \eqref{T2.3_1}. Next, applying Lemma \ref{second_lemma_introducing_idea}, we obtain
\begin{equation}
    \frac{\gamma}{\bar{\gamma}} \bar{f}_{k(j)+\eta}+\left(1-\frac{\gamma}{\bar{\gamma}}\right) f\left(x^{\kappa}\right) < f^{\star}.
\end{equation}
Consequently,
\begin{equation}
    \bar{f}^{\prime}=\frac{\gamma}{\bar{\gamma}} \bar{f}_{k(j)+\eta}+\left(1-\frac{\gamma}{\bar{\gamma}}\right) \min _{\kappa \in\{k(j), \dots, k(j)+\eta\}}\left\{f\left(x^\kappa\right)\right\} < f^{\star}. 
\end{equation}
Since $\gamma<\bar{\gamma}$, equation \eqref{level updating for subgradient method} is a convex combination.  Note that $\bar{f}_{k(j)+\eta}$ is less than $\min_{\kappa \in \{k(j),\ldots,k(j)+\eta\}}{f(x^\kappa)}$, and therefore $\bar{f}^{\prime} > \bar{f}_{k(j)+\eta}$. The proof is complete. 
\end{proof}

The aforementioned results remain valid when any inequality satisfied by $x^{\star}\in\mathcal{X}^{\star}$ is added to the PSVD problem \eqref{PSVD problem}. Introducing additional inequalities makes the PSVD problem tighter, which may lead to more frequent level adjustments. 
% A straightforward approach is to include linear constraints. 
% You make it sound too easy, even trivial, whereas excluding this sentence will not diminish the message, since you provide an example right after. 
For example, in the dual problems of integer programming discussed in Subsection~\ref{subsection 4.2.1}, each element of a feasible solution $x$ must be non-negative. Therefore, the constraint $x\geq 0$ can be incorporated. 

According to the above theorem, the level value can be dynamically adjusted by periodically checking the feasibility of the PSVD problem \eqref{PSVD problem}. When infeasibility is detected, a new level value is computed. The complete algorithm is presented below.

\noindent \textbf{Algorithm 2.1: Subgradient Method with PSADLA} 

\noindent\textbf{Step 1 (Initialization)} Initialize $x^0$, initialize a level value $\bar{f}_0 < f^{\star}$, and select parameters $\gamma$ and $\Bar{\gamma}$ such that $0 < \gamma < \Bar{\gamma} < 2$. Set $k = 0$.

\noindent\textbf{Step 2 (Update)} Update as
$x^{k+1}=P_{\mathcal{X}}\left(x^k-s^k \cdot g^k\right)$,
where the stepsize is given by $s^k=\gamma \cdot \frac{f\left(x^k\right)-\bar{f}_{k}}{\left\|g^k\right\|^2}$.

\noindent\textbf{Step 3 (Level Adjustment)} Append $\left(g^{k}\right)^{\top} x \leq\left(g^{k}\right)^{\top} x^{k}-\frac{1}{\bar{\gamma}} s^{k}\left\|g^{k}\right\|^2$ to the PSVD problem \eqref{PSVD problem}. If the resulting problem admits no feasible solution, update the level value using the rule \eqref{level updating for subgradient method} and remove all previously added inequalities from the PSVD problem. Otherwise, keep the same level, i.e., $\bar{f}_{k+1} = \bar{f}_{k}$.

\noindent\textbf{Step 4 (Stopping Criteria Checking)} Stop if any of the following is satisfied: the gap between the best objective function value found so far and the current level is sufficiently small, i.e., $\min _{\kappa \in\{1, 2, \dots, k\}}f\left(x^{\kappa}\right) - \bar{f}_{k}< \varsigma$; the maximum number of iterations is reached; the allotted computational time is exceeded. If none of these conditions is met, increment $k$ by 1 and return to Step 2.

In the following, we compare the PSVD problem with the SDD problem \eqref{divergencedetection}. We begin by stating a proposition that provides the basis for comparing the relative ``tightness" of the two problems.

\begin{proposition} 
\label{Prop 2.1}
If $x^{\prime}$ is a feasible solution of the PSVD problem \eqref{PSVD problem}, it is also feasible to the following constraint satisfaction problem with decision vector $x \in \mathbb{R}^n$:
\begin{equation}\label{prop2.1_1}
    \left\{\begin{aligned}
    \left\|x-x^{k(j)+1}\right\|^2-\left\|x-x^{k(j)}\right\|^2 & \leq\left(1-\frac{2}{\bar{\gamma}}\right)\left(s^{k(j)}\right)^2\left\|g^{k(j)}\right\|^2, \\
    \left\|x-x^{k(j)+2}\right\|^2-\left\|x-x^{k(j)+1}\right\|^2 & \leq\left(1-\frac{2}{\bar{\gamma}}\right)\left(s^{k(j)+1}\right)^2\left\|g^{k(j)+1}\right\|^2, \\
    \vdots & \\
    \left\|x-x^{k(j)+\eta+1}\right\|^2-\left\|x-x^{k(j)+\eta}\right\|^2 & \leq\left(1-\frac{2}{\bar{\gamma}}\right)\left(s^{k(j)+\eta}\right)^2\left\|g^{k(j)+\eta}\right\|^2.
    \end{aligned}\right.\end{equation}
\end{proposition}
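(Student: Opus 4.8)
The plan is to establish the claim one constraint at a time: since both problems consist of $\eta+1$ separate inequalities indexed by $i \in \{0,\dots,\eta\}$, it suffices to show that, for each fixed $i$ (write $k = k(j)+i$), the $i$-th linear PSVD inequality satisfied by $x'$ forces the corresponding quadratic inequality of \eqref{prop2.1_1}. The bridge between the two is the update rule $x^{k+1}=P_{\mathcal{X}}(x^k - s^k g^k)$, which lets me rewrite the quadratic difference $\|x'-x^{k+1}\|^2-\|x'-x^k\|^2$ in terms of the inner product $(g^k)^\top(x'-x^k)$ that the PSVD constraint controls.

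Concretely, first I would invoke the non-expansiveness of the projection to bound $\|x'-x^{k+1}\|^2 \le \|x'-(x^k-s^k g^k)\|^2$, and then expand the right-hand side as a square, obtaining $\|x'-x^k\|^2 + 2 s^k (g^k)^\top(x'-x^k) + (s^k)^2\|g^k\|^2$. The $i$-th PSVD constraint, rearranged, reads $(g^k)^\top(x'-x^k) \le -\tfrac{1}{\bar{\gamma}} s^k \|g^k\|^2$; multiplying this by $2 s^k$ (legitimate since the Polyak stepsize is strictly positive, as $f(x^k)>\bar{f}_k$) and substituting gives $\|x'-x^{k+1}\|^2 - \|x'-x^k\|^2 \le -\tfrac{2}{\bar{\gamma}}(s^k)^2\|g^k\|^2 + (s^k)^2\|g^k\|^2 = \left(1-\tfrac{2}{\bar{\gamma}}\right)(s^k)^2\|g^k\|^2$, which is exactly the $i$-th inequality of \eqref{prop2.1_1}. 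Ranging over all $i$ completes the argument. As a byproduct, since $\bar{\gamma}<2$ makes $1-2/\bar{\gamma}<0$, the right-hand sides are negative, so feasibility of \eqref{prop2.1_1} in turn implies feasibility of the SDD problem \eqref{divergencedetection}; this is the sense in which PSVD is the tighter test.

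The step I expect to be the main obstacle, and the one to state most carefully, is the projection bound $\|x'-P_{\mathcal{X}}(z)\|^2 \le \|x'-z\|^2$. This is the non-expansiveness of $P_{\mathcal{X}}$ relative to the reference point $x'$, and it is valid only when $x' \in \mathcal{X}$ (for a point outside $\mathcal{X}$ it can fail, e.g.\ when $\mathcal{X}$ is a single point). Everything else in the derivation is an exact algebraic identity, so the whole content of the proposition rests on applying this inequality legitimately. I would therefore either (i) restrict the comparison to feasible points $x' \in \mathcal{X}$, which is all that is needed in practice since the detector is ultimately used to test whether $x^\star \in \mathcal{X}^\star \subseteq \mathcal{X}$ is feasible, or (ii) note that in the unconstrained case $\mathcal{X}=\mathbb{R}^n$ the projection is the identity and the expansion holds with equality, so no non-expansiveness is invoked at all. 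Tracking the inequality direction when multiplying by $2s^k$, together with confirming $s^k>0$, are the only other places where care is required.
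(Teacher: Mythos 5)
Your proof is correct and follows essentially the same route as the paper's: non-expansiveness of the projection $P_{\mathcal{X}}$, expansion of $\bigl\|x'-(x^k-s^k g^k)\bigr\|^2$, and substitution of the rearranged linear PSVD constraint, carried out constraint by constraint. Your caveat about the projection bound is well taken and is in fact sharper than the paper's own treatment: the paper states its key inequality only for $x \in \mathcal{X}$ yet then applies it to an arbitrary PSVD-feasible $x' \in \mathbb{R}^n$ (as does its later use in the infeasibility lemma), so your explicit restriction to $x' \in \mathcal{X}$ (or to the unconstrained case $\mathcal{X}=\mathbb{R}^n$, where the step is an identity) makes precise a hypothesis the paper leaves implicit.
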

\begin{proof}
Consider the first inequality of the PSVD problem \eqref{PSVD problem} and the first inequality of \eqref{prop2.1_1}.
The first inequality of \eqref{PSVD problem} can equivalently be written as 
\begin{equation}\label{prop2.1_2}
    -2 s^{k(j)}\left(x^{k(j)}-x\right)^{\top} g^{k(j)}+\left(s^{k(j)}\right)^2\left\|g^{k(j)}\right\|^2 \leq\left(1-\frac{2}{\bar{\gamma}}\right)\left(s^{k(j)}\right)^2\left\|g^{k(j)}\right\|^2.
\end{equation}
By the non-expansive property of $P_{\mathcal{X}}$, we have the following inequality for $\forall x \in \mathcal{X}$:
\begin{equation}\label{prop2.1_3}
    \left\|x^{k(j)+1}\!-\!x\right\|^2\!-\!\left\|x^{k(j)}\!-\!x\right\|^2 \leq -2 s^{k(j)}\left(x^{k(j)}\!-\!x\right)^{\top} g^{k(j)}\!+\!\left(s^{k(j)}\right)^2\left\|g^{k(j)}\right\|^2.
\end{equation}
The right-hand side of \eqref{prop2.1_3} is the same as the left-hand side of \eqref{prop2.1_2}. If \eqref{prop2.1_2} holds for some $x^{\prime}$, then we must have 
\begin{equation}
    \left\|x^{k(j)+1}-x^{\prime}\right\|^2-\left\|x^{k(j)}-x^{\prime}\right\|^2 \leq \left(1-\frac{2}{\bar{\gamma}}\right)\left(s^{k(j)}\right)^2\left\|g^{k(j)}\right\|^2,
\end{equation}
which implies that $x^{\prime}$ satisfies the first inequality of \eqref{prop2.1_1}. The results hold for other inequalities. The proof is complete.
\end{proof}

In the above proposition, it is shown that the PSVD problem \eqref{PSVD problem} is equivalent to or is tighter than the problem \eqref{prop2.1_1}; that is, the feasible region of the PSVD problem \eqref{PSVD problem} is equal to or is contained within that of problem \eqref{prop2.1_1}. Moreover, the left-hand side of \eqref{prop2.1_1} is identical to that of \eqref{divergencedetection}, while its right-hand side is negative, implying that the problem \eqref{prop2.1_1} is strictly tighter than the SDD problem \eqref{divergencedetection}. Therefore, we conclude that the PSVD problem is strictly tighter than the SDD problem \eqref{divergencedetection}. Therefore, with the PSVD problem, the level value can be adjusted more frequently. Furthermore, the PSVD problem \eqref{PSVD problem} is linear and can be solved using linear programming methods. Therefore, checking its feasibility is computationally more efficient than that of the SDD problem.

\subsection{Intuitive Explanation}
\label{subsec_ituitive_explanation}

In this subsection, we use diagrams to illustrate the intuition behind our level adjustment approach. The PSVD problem is developed to detect whether a stepsize $s_k$ violates the following inequality:
\begin{equation}
\label{eq_polyak_inequality}
    0<s^k<\bar{\gamma}\cdot \frac{f(x^k)-f^{\star}}{||g^k||^2},
\end{equation}
where parameter $\bar{\gamma}<2$. In Fig. \ref{Illustration_figure_sub1}, a solution $x^{k}\in\mathcal{X}$, the subgradient $g^k$ at $x^{k}$, and an optimal solution $x^{\star}\in\mathcal{X}^{\star}$ are depicted. Due to convexity, the negative direction of the subgradient (the red dashed line) must form an acute angle with the direction from $x^k$ to $x^\star$ (the black dashed line). The projection of $x^{\star} - x^k$ onto the normalized negative subgradient direction has a length of $\frac{(-g^k)^{\top}(x^{\star}-x^{k})}{\|g^k\|}$. In Lemma \ref{first_lemma_introducing_idea}, it has been shown that if the level value is proper such that \eqref{eq_polyak_inequality} is satisfied, then $s^k\|g^k\|/\bar{\gamma}$ is less than or equal to the projection, i.e., 
\begin{equation}
\label{relationship_projection_stepsize}
    \frac{s^k\|g^k\|}{\bar{\gamma}}\leq \frac{(-g^k)^{\top}(x^{\star}-x^{k})}{\|g^k\|}.
\end{equation}
Also, if inequality \eqref{relationship_projection_stepsize} is violated, then the stepsize is too large, causing a violation of \eqref{eq_polyak_inequality}. Therefore, the violation of \eqref{relationship_projection_stepsize} indicates that the current level value is inappropriate. Moreover, we have shown that a ``better'' level value (closer to the actual optimal value) can be calculated using \eqref{second_lemma_introducing_idea-1} whenever such a violation is detected.

\begin{figure}[h!]
    \centering
    \begin{subfigure}[b]{0.6\textwidth}
        \includegraphics[width=\textwidth]{./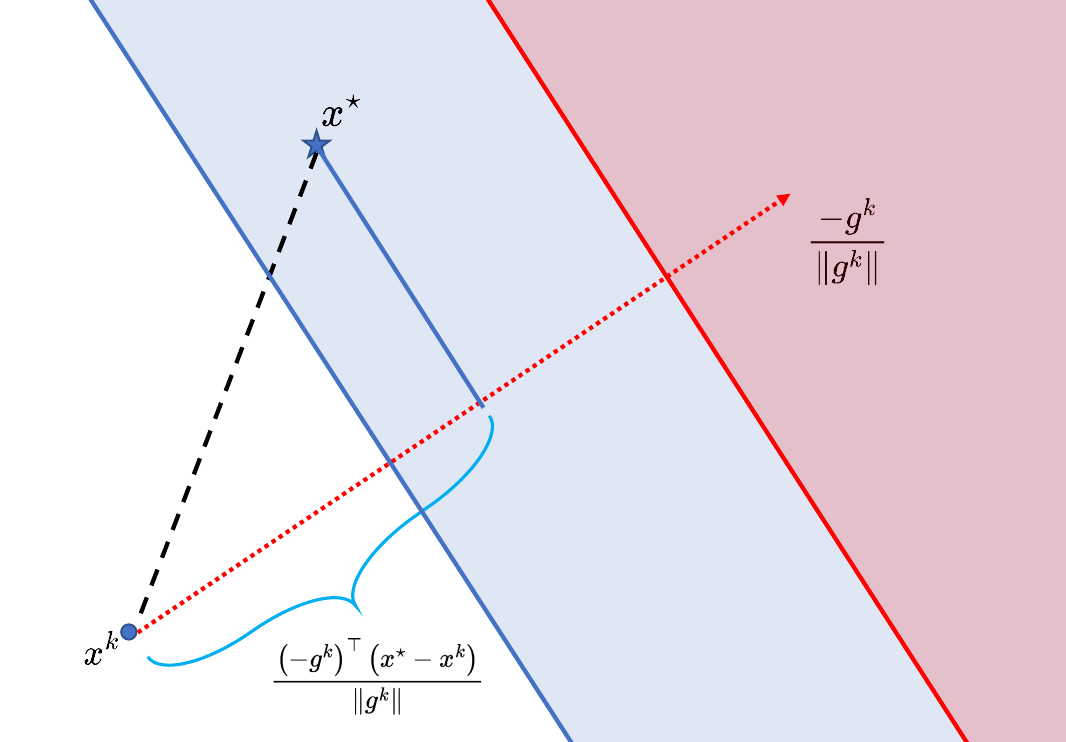}
        \caption{Illustration of the half-spaces for establishing a PSVD problem}
        \label{Illustration_figure_sub1}
    \end{subfigure}
    \hfill
    \begin{subfigure}[b]{0.6\textwidth}
        \includegraphics[width=\textwidth]{./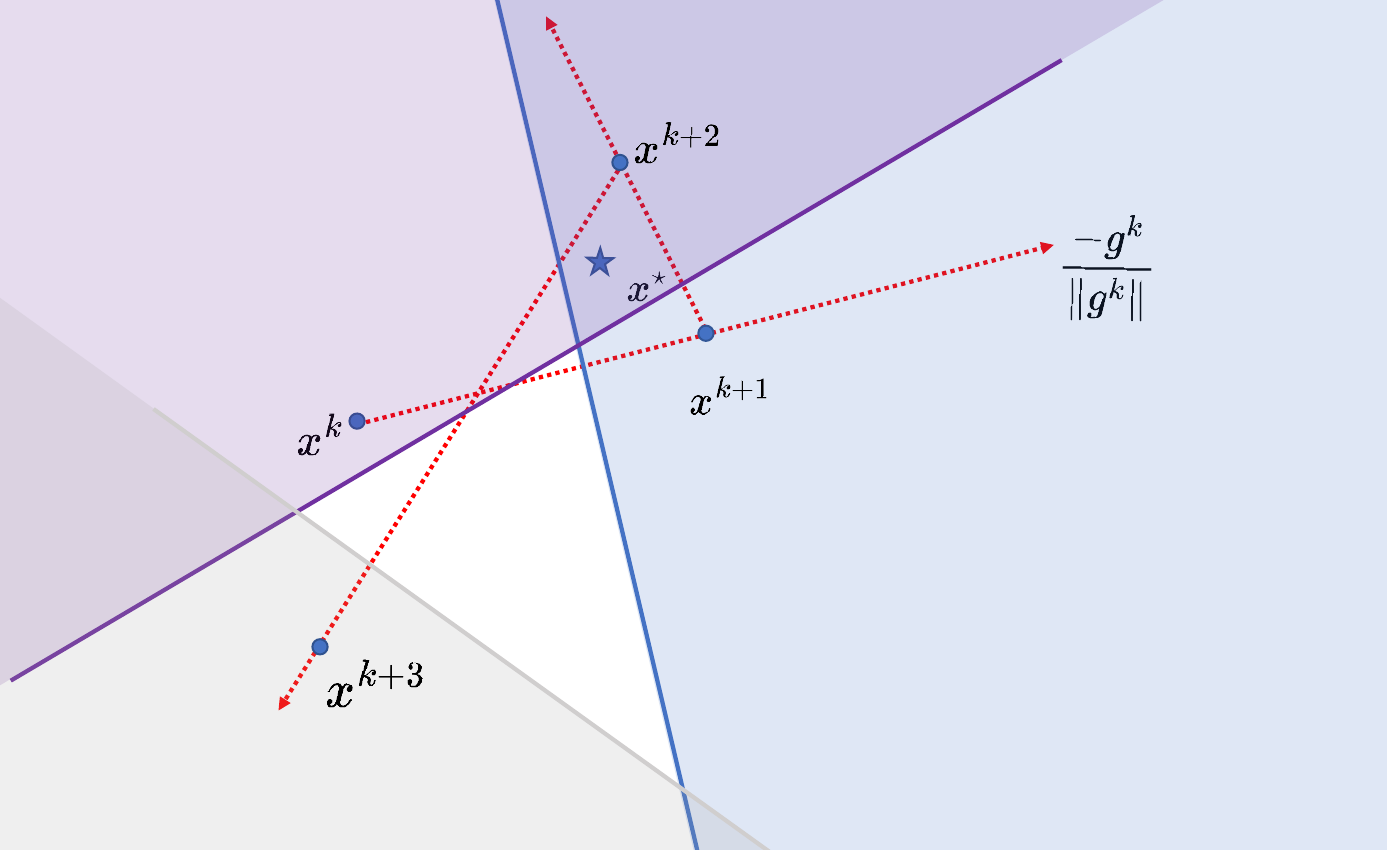}
        \caption{A PSVD problem established in three iterations}
        \label{Illustration_figure_sub2}
    \end{subfigure}
    \caption{Illustration of the PSVD-based level adjustment}
    \label{Illustration_half_spaces}
\end{figure}

Since $x^{\star}$ is unknown, it is impractical to check the violation of \eqref{relationship_projection_stepsize} directly. However, we can define the following half-space based on \eqref{relationship_projection_stepsize} as
\begin{equation}
   H^k=\left\{ x\in\mathbb{R}^n : \frac{(-g^k)^{\top}(x-x^{k})}{\|g^k\|} \geq \frac{s^k\|g^k\|}{\bar{\gamma}} \right\}.
\end{equation}
If $s^k$ satisfies \eqref{eq_polyak_inequality}, then the corresponding half-space $H^k$ ``covers'' the optimal solution $x^{\star}$ as illustrated by the blue-shaded region in Fig.~\ref{Illustration_figure_sub1}. 
Conversely, if $H^k$ does not cover $x^{\star}$, as illustrated by the red-shaded region in Fig. \ref{Illustration_figure_sub1}, then \eqref{eq_polyak_inequality} is violated. Building on this idea, we collect subgradients and stepsizes from a sequence of iterations and establish a series of half-spaces to define the PSVD problem. If the level value used is ``proper'' such that \eqref{eq_polyak_inequality} is satisfied, then the optimal solution $x^{\star}$ must be covered by all the half-spaces (their intersection is not empty), and therefore the PSVD must admit at least one feasible solution such as $x^{\star}$ itself. However, if the PSVD problem is infeasible, then at least one inequality \eqref{relationship_projection_stepsize} must have been violated, implying that the level value is too low and needs to be adjusted. Following Theorem \ref{Theorem 2.3}, a better level value can be generated. Furthermore, we will establish a key result in Lemma~\ref{Lemma-subgradient-infinitely-often}: for any given level value, the PSVD problem becomes infeasible within a finite number of iterations. 

To visualize the violation of the PSVD problem over multiple iterations, Fig. \ref{Illustration_figure_sub2} illustrates a PSVD problem constructed across three iterations. Assume that stepsizes at each iterations are calculated by using the same level value. At iteration $k$, the level value is proper, \eqref{relationship_projection_stepsize} is satisfied, and the corresponding half-space (the blue region) contains the optimal solution $x^{\star}$. The updated solution $x^{k+1}$ moves closer to $x^{\star}$. At iteration $k+1$, the level remains proper, and the corresponding half-space (the purple region) also contains $x^{\star}$. However, at iteration $k+2$, the level value is detected as ``inappropriate,'' and \eqref{relationship_projection_stepsize} is violated. The corresponding half-space excludes $x^{\star}$, and the intersection of all three half-spaces is empty.  As a result, the PSVD problem defined by the three inequalities corresponding to the three half-spaces has no feasible solution, thus triggering a level adjustment.

\subsection{Convergence Proof}
\label{subsection 2.2}

In this subsection, we prove the convergence of the subgradient method with PSADLA under the subgradient boundedness assumption. We begin by presenting a key lemma. 

\begin{lemma}
\label{Lemma-subgradient-infinitely-often} 
    In the subgradient method with PSADLA, the PSVD problem \eqref{PSVD problem} becomes infeasible infinitely often as $k \rightarrow \infty$.
\end{lemma}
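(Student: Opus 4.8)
The plan is to argue by contradiction. Suppose the PSVD problem \eqref{PSVD problem} becomes infeasible only finitely often. Then after the final level adjustment the level value is frozen at some $\bar f < f^{\star}$ from an iteration $k(j_0)$ onward, and the PSVD problem reset at $k(j_0)$ stays feasible for every horizon $\eta \geq 0$. I will show that this forces a fixed center to see its squared distances to the iterates decrease by a fixed positive amount at every step, which is impossible.

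First I would record a uniform lower bound on the per-iteration displacement. Since $f(x^{\kappa}) \geq f^{\star} > \bar f$ for all $\kappa$, the stepsize \eqref{polyak stepsize with estimation} satisfies, using Assumption \ref{Assumption_1},
\begin{equation*}
    \big(s^{\kappa}\big)^2\big\|g^{\kappa}\big\|^2 = \gamma^2\frac{\big(f(x^{\kappa})-\bar f\big)^2}{\big\|g^{\kappa}\big\|^2} \geq \frac{\gamma^2 (f^{\star}-\bar f)^2}{C^2} =: \epsilon > 0.
\end{equation*}
Because $\bar\gamma < 2$, the constant $c := 2/\bar\gamma - 1$ is positive, and Proposition \ref{Prop 2.1} rewrites each PSVD constraint as the corresponding inequality in \eqref{prop2.1_1}, whose right-hand side equals $-c\,\big(s^{\kappa}\big)^2\|g^{\kappa}\|^2$. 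Summing the telescoping inequalities \eqref{prop2.1_1} over $i=0,\dots,\eta$ for any feasible center $x'\in\mathcal X$ then yields
\begin{equation*}
    0 \leq \big\|x' - x^{k(j_0)+\eta+1}\big\|^2 \leq \big\|x' - x^{k(j_0)}\big\|^2 - c\sum_{i=0}^{\eta}\big(s^{k(j_0)+i}\big)^2\big\|g^{k(j_0)+i}\big\|^2 \leq \big\|x' - x^{k(j_0)}\big\|^2 - c\,(\eta+1)\,\epsilon .
\end{equation*}
For a center $x'$ whose distance to $x^{k(j_0)}$ does not grow with $\eta$, the right-hand side becomes negative once $\eta$ is large, which is the contradiction I am after.

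To supply such a fixed center I would use $x^{\star}\in\mathcal X^{\star}$, for which Proposition \ref{Prop 2.1} legitimately applies, since its proof relies on nonexpansiveness of $P_{\mathcal X}$ at points of $\mathcal X$. It then remains to check that $x^{\star}$ satisfies every constraint of the PSVD problem. Combining the subgradient inequality \eqref{subgradient inequality} evaluated at $x^{\star}$, namely $(g^{\kappa})^{\top}x^{\star} \leq (g^{\kappa})^{\top}x^{\kappa} - \big(f(x^{\kappa})-f^{\star}\big)$, with the stepsize formula \eqref{polyak stepsize with estimation} shows that $x^{\star}$ meets the $\kappa$-th constraint of \eqref{PSVD problem} whenever $f(x^{\kappa}) \geq T := (\bar\gamma f^{\star} - \gamma \bar f)/(\bar\gamma - \gamma)$, and a short computation gives $T > f^{\star}$.

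The hard part is exactly this feasibility of $x^{\star}$. With a frozen, too-low level the $\kappa$-th PSVD constraint can exclude $x^{\star}$ precisely when $f(x^{\kappa})$ dips below the threshold $T > f^{\star}$, which is the regime in which the iterates are closest to optimality. The crux is therefore to rule out that $x^{\star}$ is permanently expelled from the feasible region, i.e.\ to guarantee that the constraints generated after $k(j_0)$ retain $x^{\star}$ along an infinite set of horizons $\eta$. I would attempt this by feeding the frozen level back into the fundamental inequality \eqref{fundamental result}: an over-large stepsize makes $\|x^{k+1}-x^{\star}\|^2 - \|x^{k}-x^{\star}\|^2$ positive whenever $f(x^{k})$ is near $f^{\star}$, so the iterates cannot settle with objective values persistently below $T$, and $x^{\star}$ stays feasible infinitely often, driving the telescoped bound to $-\infty$. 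Making this last step rigorous, especially in regimes where the objective values approach $f^{\star}$, is where I expect the genuine difficulty of the proof to concentrate.
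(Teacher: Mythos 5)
Your first half reproduces the paper's argument exactly: assume the PSVD problem \eqref{PSVD problem} stays feasible after the last adjustment, convert each of its constraints via Proposition \ref{Prop 2.1} into the telescoping inequalities \eqref{prop2.1_1}, and play the resulting summable decrease against the uniform lower bound $(s^{\kappa})^2\|g^{\kappa}\|^2 \geq \gamma^2 (f^{\star}-\bar f)^2/C^2$ that the frozen level forces through \eqref{polyak stepsize with estimation} (the paper states the contradiction as summability forcing $(s^k)^2\|g^k\|^2 \to 0$; your per-step decrement is the same computation). The genuine gap is in your second half, where you set out to \emph{construct} the center by proving $x^{\star}$ feasible for \eqref{PSVD problem}. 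This inverts the quantifier in the contradiction hypothesis: the negation of ``infeasible infinitely often'' is that after some iteration the PSVD problem \emph{admits a feasible point} $x^{\prime}$, and the paper's proof simply takes that hypothesized $x^{\prime}$ as the fixed center. Nothing requires the center to be $x^{\star}$ or even near-optimal, because the stepsize lower bound uses only $f(x^k)-\bar f \geq f^{\star}-\bar f > 0$, not any property of the center. The step you flag as ``where the genuine difficulty concentrates'' is therefore not part of the proof at all.

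Moreover, the rescue you sketch cannot be completed. Within a cycle, Algorithm 2.1 only appends constraints (they are removed solely upon an adjustment, which your hypothesis excludes), so the feasible regions are nested; once a single constraint cuts off $x^{\star}$ --- possible precisely in the regime $f(x^{\kappa})<T$ that you identify, by Lemma \ref{first_lemma_introducing_idea} --- the point $x^{\star}$ is excluded from every subsequent PSVD problem in that cycle, so ``$x^{\star}$ stays feasible along infinitely many horizons'' is structurally false. Nor does expulsion of $x^{\star}$ contradict the hypothesis, since the problem can remain feasible through other points; hence feeding the frozen level back into \eqref{fundamental result} yields nothing in that case, and your dichotomy collapses exactly in the branch you cannot close. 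One observation of yours is legitimate and worth keeping: Proposition \ref{Prop 2.1}'s proof uses non-expansiveness of $P_{\mathcal{X}}$, which is valid for centers in $\mathcal{X}$, whereas the PSVD decision vector ranges over $\mathbb{R}^n$ (a point the paper glosses over, as it does the question of a single $x^{\prime}$ serving all horizons). But the remedy consistent with the paper is not to switch the center to $x^{\star}$; it is the remark following Theorem \ref{Theorem 2.3}: any inequality satisfied by $x^{\star}$ --- in particular $x \in \mathcal{X}$ --- may be appended to \eqref{PSVD problem}, after which the hypothesized feasible point lies in $\mathcal{X}$ and the telescoping argument applies to it verbatim.
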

\begin{proof} 
We proceed by contradiction. Suppose that, after some iteration $k^{\prime}$, the problem \eqref{PSVD problem} always admits a feasible solution $x^{\prime}\in \mathbb{R}^n$; that is, 
\begin{equation}
    \left(g^{\kappa}\right)^{\top} x^{\prime} \leq\left(g^{\kappa}\right)^{\top} x^{\kappa}-\frac{1}{\bar{\gamma}} s^{\kappa}\left\|g^{\kappa}\right\|^2, \quad \forall \kappa\in\{k^{\prime}, \dots, k\}.
\end{equation}
By Proposition \ref{Prop 2.1}, $x^{\prime}$ also satisfies
\begin{equation}
     \left\|x^{\prime}-x^{\kappa+1}\right\|^2-\left\|x^{\prime}-x^{\kappa}\right\|^2 \leq\left(1-\frac{2}{\bar{\gamma}}\right)\left(s^{\kappa}\right)^2\left\|g^{\kappa}\right\|^2,
     \quad \forall \kappa\in\{k^{\prime}, \dots, k\}.
\end{equation}
Summing these inequalities and then letting $k \rightarrow \infty$, we arrive at
\begin{equation}
    \left\|x^{\prime}-x^{\infty}\right\|^2-\left\|x^{\prime}-x^{k^{\prime}}\right\|^2 \leq \sum_{\kappa=k'}^{\infty} \left(1-\frac{2}{\bar{\gamma}}\right)\left(s^{\kappa}\right)^2\left\|g^{\kappa}\right\|^2. \\
\end{equation}
Since $\left\|x^{\prime}-x^{\infty}\right\|^2$ is positive, the following inequality holds 
\begin{equation}
    \left(\frac{2}{\bar{\gamma}}-1\right)\sum_{\kappa=k'}^{\infty} \left(s^{\kappa}\right)^2\left\|g^{\kappa}\right\|^2
    \leq 
    \left\|x^{\prime}-x^{k^{\prime}}\right\|^2.
\end{equation}
 The right-hand side is bounded, implying that the series on the left side must be summable. Therefore,
\begin{equation}\label{Lemma-subgradient-infinitely-often contradiction part 1}
    \lim _{\kappa \rightarrow \infty}\left(s^\kappa\right)^2\left\|g^\kappa\right\|^2=0.
\end{equation}

We now derive a contradiction. Since the problem remains feasible for all iterations after $k^{\prime}$, no level adjustment occurs beyond this point. Let $\bar{f}^{\prime}$ denote the last updated level value. Then, there exists a positive $\beta$ such that 
\begin{equation}
    f(x^k)-\bar{f}^{\prime} \geq \beta, \quad \forall k \geq k^{\prime}.
\end{equation}
Under Assumption \ref{Assumption_1}, which guarantees bounded subgradients, we have 
\begin{equation}  \left(s^k\right)^2\left\|g^k\right\|^2= \frac{\left(f\left(x^k\right)-\bar{f}^{\prime}\right)^2}{\left\|g^k\right\|^2} \geq \frac{\beta^2}{C^2}, \quad \forall k \geq k^{\prime}.
\end{equation}
This contradicts \eqref{Lemma-subgradient-infinitely-often contradiction part 1}. The proof is complete. 
\end{proof}

The above lemma reveals an important fact: the PSVD problem must become infeasible within a finite number of iterations. Therefore, as $k\rightarrow \infty$, the PSVD problem becomes infeasible infinitely often, thereby triggering an infinite number of level adjustments. This lemma plays a central role in establishing the convergence of both the level value and the value of the objective function.

\begin{theorem}
\label{Theorem 2.4} 
As $k \rightarrow \infty$, the level values converge to $f^{\star}$, i.e.,
\begin{equation} \label{section2_limit_of_level}
    \lim _{k \rightarrow \infty}\bar{f}_{k} = f^{\star}.
\end{equation}
Moreover, we have
\begin{equation} \label{limit of f - section 2}
    \lim _{k \rightarrow \infty}\Big\{\inf _{\kappa \leq k} f\left(x^\kappa\right)\Big\}=f^{\star}.
\end{equation}\end{theorem}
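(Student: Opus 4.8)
The plan is to combine two facts established earlier. By Lemma~\ref{Lemma-subgradient-infinitely-often} the PSVD problem becomes infeasible infinitely often, so the level is adjusted infinitely many times; and by Theorem~\ref{Theorem 2.3} every adjustment produces a new value that is strictly larger than the old one yet still strictly below $f^{\star}$. I would index the adjustments by $j = 0, 1, 2, \dots$, write $\bar{f}^{(j)}$ for the level value produced by the $j$-th adjustment (which stays constant throughout its window), and let $m_j := \min_{\kappa} f(x^\kappa)$ be the window minimum appearing in \eqref{level updating for subgradient method}. Setting $\lambda := \gamma/\bar{\gamma} \in (0,1)$, the update then reads $\bar{f}^{(j+1)} = \lambda\,\bar{f}^{(j)} + (1-\lambda)\,m_j$.

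For the first limit I would track the gap $\Delta_j := f^{\star} - \bar{f}^{(j)}$, which is strictly positive by Theorem~\ref{Theorem 2.3}. Subtracting the update from the identity $f^{\star} = \lambda f^{\star} + (1-\lambda) f^{\star}$ gives $\Delta_{j+1} = \lambda \Delta_j + (1-\lambda)(f^{\star} - m_j)$. Since every iterate lies in $\mathcal{X}$, we have $f(x^\kappa) \ge f^{\star}$ and hence $m_j \ge f^{\star}$, so the last term is nonpositive and the recursion collapses to the contraction $0 < \Delta_{j+1} \le \lambda \Delta_j$. Iterating yields $\Delta_j \le \lambda^{j}\Delta_0 \to 0$, so $\bar{f}^{(j)} \to f^{\star}$; because $\{\bar{f}_k\}$ is nondecreasing in $k$ and changes only at adjustments, the full sequence converges to $f^{\star}$, establishing \eqref{section2_limit_of_level}.

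For the second limit I would recover the window minima from the same recursion: $f^{\star} - m_j = (\Delta_{j+1} - \lambda \Delta_j)/(1-\lambda)$, and since both $\Delta_{j+1}$ and $\Delta_j$ tend to $0$, this forces $m_j \to f^{\star}$ while $m_j \ge f^{\star}$. Writing $K_j$ for the last iteration of the $j$-th window, the running best satisfies $f^{\star} \le \inf_{\kappa \le K_j} f(x^\kappa) \le m_j$, since the window is a subset of $\{0,\dots,K_j\}$; squeezing gives $\inf_{\kappa \le K_j} f(x^\kappa) \to f^{\star}$ along this subsequence. Finally, $\inf_{\kappa \le k} f(x^\kappa)$ is nonincreasing in $k$ and bounded below by $f^{\star}$, so it converges, and its limit must coincide with the subsequential limit $f^{\star}$, which proves \eqref{limit of f - section 2}.

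The main obstacle is less the algebra—the contraction $\Delta_{j+1} \le \lambda \Delta_j$ is immediate once $m_j \ge f^{\star}$ is invoked—than the bookkeeping required to pass cleanly between the iteration index $k$ and the adjustment index $j$, and to justify promoting the subsequential limit along $\{K_j\}$ to the limit of the full monotone sequence $\inf_{\kappa \le k} f(x^\kappa)$. I would also be careful to invoke Lemma~\ref{Lemma-subgradient-infinitely-often} explicitly, since without the guarantee of infinitely many adjustments the index $j$ would not range over all of $\mathbb{N}$ and the contraction could stall at some level strictly below $f^{\star}$.
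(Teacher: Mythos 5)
Your proof is correct, and for the first limit \eqref{section2_limit_of_level} it takes a genuinely different---and sharper---route than the paper. The paper argues by contradiction: assuming $f^{\star}-\bar{f}_{k(j)} \geq \rho$ for all large $j$, it lower-bounds each increment $\bar{f}_{k(j+1)}-\bar{f}_{k(j)}$ by $\left(1-\gamma/\bar{\gamma}\right)\rho$ and sums to derive divergence of the level sequence, contradicting the upper bound $f^{\star}$ supplied by Theorem~\ref{Theorem 2.3}. You instead notice that the same key fact the paper uses---the window minimum $m_j := \min_{\kappa}f(x^{\kappa})$ satisfies $m_j \geq f^{\star}$, valid because every iterate is projected onto $\mathcal{X}$---turns the exact identity $\Delta_{j+1} = \lambda\Delta_j + (1-\lambda)(f^{\star}-m_j)$, with $\Delta_j := f^{\star}-\bar{f}_{k(j)}$ and $\lambda := \gamma/\bar{\gamma}\in(0,1)$, into the direct contraction $0 < \Delta_{j+1} \leq \lambda\Delta_j$, hence $\Delta_j \leq \lambda^{j}\Delta_0 \to 0$. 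This is strictly more informative than the paper's qualitative conclusion: it gives geometric decay of the level gap per adjustment, and it hands you $m_j \to f^{\star}$ by simply inverting the same identity, avoiding the paper's passage to the limit in \eqref{T2.4 3}. Your treatment of \eqref{limit of f - section 2} then matches the paper's in substance and is in fact more careful on the last step: where the paper concludes directly from the existence of a subsequence of $\{f(x^k)\}$ converging to $f^{\star}$, you explicitly use that $\inf_{\kappa\leq k}f(x^{\kappa})$ is nonincreasing and bounded below by $f^{\star}$ to promote the subsequential limit along the window endpoints $K_j$ to the full limit. Two caveats you partly anticipate are worth keeping in view: the geometric rate is in the adjustment index $j$, and since Lemma~\ref{Lemma-subgradient-infinitely-often} guarantees each window is finite but gives no uniform bound on window lengths, your contraction does not translate into an iteration-complexity bound in $k$; and, as you correctly flag, Lemma~\ref{Lemma-subgradient-infinitely-often} is indispensable in both arguments, since without infinitely many adjustments the recursion \eqref{T2.4 3} would apply only finitely often and the level could stall strictly below $f^{\star}$.
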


\begin{proof}
\textbf{Proof for \eqref{section2_limit_of_level}:} According to Lemma \ref{Lemma-subgradient-infinitely-often}, the level value is adjusted infinitely often as $k \rightarrow \infty$. Since the level value remains unchanged when the PSVD problem is feasible, to establish \eqref{section2_limit_of_level}, it suffices to show that
$\bar{f}_{k(j)}\rightarrow f^{\star}$ as $j \rightarrow \infty$. The sequence $\{\bar{f}_{k(j)}\}_{j=1}^{\infty}$ increases monotonically and, by Theorem \ref{Theorem 2.3}, is upper bounded by $f^{\star}$; thus, it must converge to a finite limit.  We now prove by contradiction that this limit must be $f^{\star}$. 

Assume the limit is not $f^{\star}$. Then there exists $\rho>0$ and an index $j^{\prime}$ such that for all $j\geq j^{\prime}$, 
\begin{equation}
    f^{\star}-\bar{f}_{k(j)} \geq \rho.\label{T2.4 2}
\end{equation}
From the level adjustment formula \eqref{level updating for subgradient method}, for any two successive elements within $\{\bar{f}_{k(j)}\}_{j=1}^{\infty}$, the following holds:
\begin{equation}
\label{T2.4 3}
    \bar{f}_{k(j+1)}=\frac{\gamma}{\bar{\gamma}} \bar{f}_{k_{j}}+\left(1-\frac{\gamma}{\bar{\gamma}}\right) \min _{\kappa \in\{k(j), k(j)+1, \dots, k(j+1)-1\}}f\left(x^{\kappa}\right).
\end{equation}
Subtracting  $-\bar{f}_{k(j)}$ from both sides leads to
\begin{equation}
\begin{aligned}
    &\bar{f}_{k(j+1)}-\bar{f}_{k(j)}=\\
    &-\left(1-\frac{\gamma}{\bar{\gamma}}\right) \bar{f}_{k(j)}+\left(1-\frac{\gamma}{\bar{\gamma}}\right) \min _{\kappa \in\{k(j), k(j)+1, \dots, k(j+1)-1\}}f\left(x^{\kappa}\right).
\end{aligned}
\end{equation}
Since \eqref{T2.4 2} holds, for $j\geq j^{\prime}$, it follows that 
\begin{equation} 
\label{different of levels}
\begin{aligned}
    &\bar{f}_{k(j+1)}-\bar{f}_{k(j)}=\\
    &\left(1-\frac{\gamma}{\bar{\gamma}}\right)\left(\min _{\kappa \in\{k(j), k(j)+1, \dots, k(j+1)-1\}}\left\{f\left(x^{\kappa}\right)\right\}-\bar{f}_{k(j)}\right) \geq\left(1-\frac{\gamma}{\bar{\gamma}}\right) \rho.
\end{aligned}
\end{equation}
According to Lemma \ref{Lemma-subgradient-infinitely-often}, the level value is adjusted infinitely often. Summing \eqref{different of levels} over $j = j^{\prime},\dots,\infty$ 
leads to
\begin{equation}
    \bar{f}_{\infty}-\bar{f}_{k(j^\prime)} = \infty .
\end{equation}    
This contradicts the fact that the sequence $\{\bar{f}_{k(j)}\}_{j={j^{\prime}}}^{\infty}$ is bounded above by $f^{\star}$. Hence, we must have $\lim _{j \rightarrow \infty}\bar{f}_{k(j)} = f^{\star}$. This proves \eqref{section2_limit_of_level}.

\textbf{Proof for \eqref{limit of f - section 2}: } 
Letting  $j\rightarrow \infty$ in \eqref{T2.4 3}, we obtain:
\begin{equation}
    \lim _{j \rightarrow \infty} \bar{f}_{k(j+1)}=\frac{\gamma}{\bar{\gamma}} \lim _{j \rightarrow \infty} \bar{f}_{k(j)}+\left(1-\frac{\gamma}{\bar{\gamma}}\right) \lim _{j \rightarrow \infty}\left\{ \min _{\kappa \in\{k(j), k(j)+1, \dots, k(j+1)-1\}}f\left(x^{\kappa}\right)\right\}.
\end{equation}
Since we have shown that  $\lim _{j \rightarrow \infty}\bar{f}_{k(j)} = f^{\star}$, we have 
\begin{equation}
    f^{\star}=\frac{\gamma}{\bar{\gamma}} f^{\star}+\left(1-\frac{\gamma}{\bar{\gamma}}\right) \lim _{j \rightarrow \infty}\left\{  \min _{\kappa \in\{k(j), k(j)+1, \dots, k(j+1)-1\}}f\left(x^{\kappa}\right)\right\},
\end{equation}
which simplifies to:
\begin{equation}
    \lim _{j \rightarrow \infty} \Big\{ \min _{\kappa \in\{k(j), k(j)+1, \dots, k(j+1)-1\}}f\left(x^{\kappa}\right)\Big\}=f^{\star}.
\end{equation}
Thus, there exists a subsequence of $\{f(x^{k})\}$ converges to $f^{\star}$. Since $\{f(x^{k})\}$ is bounded below by $f^{\star}$, it follows that
\begin{equation}
    \lim _{k \rightarrow \infty}\Big\{\inf _{\kappa \leq k} f\left(x^\kappa\right)\Big\}=f^{\star}. 
\end{equation}
This completes the proof. 
\end{proof}

\section{Convergence with Approximate Subgradients} \label{section 3}

For some optimization problems, computing an exact subgradient in each iteration is computationally expensive. In this section, we present a significant and insightful result: with our decision-guided level adjustment scheme, it is not necessary to compute an exact subgradient at every iteration—an approximate subgradient suffices to ensure convergence. Specifically, we define $\tilde{g}^k\in \mathbb{R}^n$ to be an approximate subgradient at iteration $k$, if there exists a scalar $F^k$ such that $\bar{f}_{k} < F^k \leq f(x^k)$, and the following inequality holds
\begin{equation}
\label{approximate subgradient definition}
    f^{\star}-F^k \geq\left(\tilde{g}^k\right)^{\top}\left(x^{\star}-x^k\right), \forall x^{\star} \in \mathcal{X}^{\star}.
\end{equation}
Compared to the original subgradient requirement in~\eqref{subgradient inequality}, this condition is significantly more relaxed. Notably, the left-hand side of \eqref{approximate subgradient definition} is larger, and the inequality only needs to hold for all $x^{\star} \in \mathcal{X}^{\star}$, rather than for all  $x \in \mathcal{X}$. The computation of $\tilde{g}^k$ and $F^k$ is problem-specific, and is discussed in Subsection~\ref{subsection 3.2}.

With the approximate subgradient $\tilde{g}^k$, the solution is updated at iteration $k$ as 
\begin{equation}
\label{update equation with surrogate subgradient} 
x^{k+1} = P_\mathcal{X}\left(x^k - s^k \tilde{g}^k\right), \end{equation}
with
\begin{equation}
\label{polyak stepsize surrogate subgradient} 
s^k = \gamma \cdot \frac{F^k - \bar{f}_{k}}{\left\|\tilde{g}^k\right\|^2}, \quad 0 < \gamma < \bar{\gamma} < 2.
\end{equation}
As in the standard subgradient method, the level value is  adjusted by periodically checking the feasibility of a PSVD problem. Specifically, the PSVD problem at iteration $k(j)+\eta$ is defined by the following set of inequalities:
\begin{equation}
\label{PSVD for surrogate subgradient}
    \left\{\begin{aligned}
    \left(\tilde{g}^{k(j)}\right)^{\top} x &\leq\left(\tilde{g}^{k(j)}\right)^{\top} x^{k(j)}-\frac{1}{\bar{\gamma}} s^{k(j)}\left\|\tilde{g}^{k(j)}\right\|^2, \\
    \left(\tilde{g}^{k(j)+1}\right)^{\top} x &\leq\left(\tilde{g}^{k(j)+1}\right)^{\top} x^{k(j)+1}-\frac{1}{\bar{\gamma}} s^{k(j)+1}\left\|\tilde{g}^{k(j)+1}\right\|^2, \\
    \vdots \\
    \left(\tilde{g}^{k(j)+\eta}\right)^{\top} x &\leq\left(\tilde{g}^{k(j)+\eta}\right)^{\top} x^{k(j)+\eta}-\frac{1}{\bar{\gamma}} s^{k(j)+\eta}\left\|\tilde{g}^{k(j)+\eta}\right\|^2.
\end{aligned}\right.\end{equation}
If this problem admits no feasible solution, then a new level value is generated as 
\begin{equation}
\label{level adjustment formula surrogate subgradient}
    \bar{f}^{\prime}=\frac{\gamma}{\bar{\gamma}} \bar{f}_{k(j)+\eta}+\left(1-\frac{\gamma}{\bar{\gamma}}\right) \min _{\kappa \in\{k(j), \dots, k(j)+\eta\}}F^\kappa.
\end{equation}
The convergence results established in Section \ref{section 2} remain valid, under the boundedness assumption, $\|\tilde{g}^k\|\leq \tilde{C}, \forall k$, and the satisfaction of the following condition:
\begin{condition} 
\label{condition on Fk}
\noindent There exists a positive scalar $\varepsilon$, for any $k$, if $F^k \neq f(x^k)$, then $\bar{f}_{k}+\varepsilon \leq F^k$.
\end{condition} 
The above condition ensures that when $F^k$ is not strictly equal to $f(x^k)$, there is a gap between $F^k$ and the level value, ensuring a nonzero stepsize. This condition is important for establishing the convergence and is easy to satisfy. 

Compared to the subgradient method, the above approximate subgradient method may require more solution updates to achieve the same level of accuracy. However, this is not an issue, as calculating an approximate subgradient is generally much less computationally intensive than calculating an exact subgradient. For example, as discussed in Subsection \ref{subsection 3.2}, when solving the dual problem of an integer programming problem, the computational efficiency of calculating approximate subgradients significantly outweighs that of exact subgradients, making the approach advantageous in practice.

The convergence analysis is presented in Subsection~\ref{subsection 3.1}, followed by a discussion of practical applications in Subsection~\ref{subsection 3.2}. For two broad classes of problems, we provide detailed procedures for computing approximate subgradients and for satisfying Condition~\ref{condition on Fk}.

\subsection{Convergence Proof} \label{subsection 3.1}

This subsection establishes the convergence of the approximate subgradient method under the subgradient boundedness condition and Condition \ref{condition on Fk}. A theorem is presented next to introduce key ideas and to prove convergence. 

\begin{theorem}
\label{Theorem 3.1}
Suppose $\{x^k\}_{k=k(j)}^{k(j)+\eta}$ is generated iteratively according to \eqref{update equation with surrogate subgradient} using the approximate subgradients $\{\tilde{g}^k\}_{k=k(j)}^{k(j)+\eta}$,  stepsizes $\{s^k\}_{k=k(j)}^{k(j)+\eta}$, and identical level values $\{\bar{f}_{k}\}_{k=k(j)}^{k(j)+\eta}$. If the PSVD \eqref{PSVD for surrogate subgradient} is infeasible, then there exists $\kappa \in \{k(j), k(j)  + 1, \dots, k(j)+\eta\}$ such that
\begin{equation}
    s^\kappa > \bar{\gamma} \cdot \frac{F^\kappa-f^{\star}}{\left\|\tilde{g}^\kappa\right\|^2}.\label{T3.3_1}
\end{equation}
Then, a new level value $\bar{f}^{\prime}$ can be computed using \eqref{level adjustment formula surrogate subgradient} as
\begin{equation}
    \bar{f}^{\prime}=\frac{\gamma}{\bar{\gamma}} \bar{f}_{k(j)+\eta}+\left(1-\frac{\gamma}{\bar{\gamma}}\right) \min _{\kappa \in\{k(j), k(j)+1 \dots, k(j)+\eta\}}F^\kappa,
\end{equation}
which is a tighter level value, i.e., $\bar{f}_{k(j)+\eta}  <  \bar{f}^{\prime} < f^{\star}$.
\end{theorem}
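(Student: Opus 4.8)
The plan is to mirror the argument of Theorem~\ref{Theorem 2.3} almost verbatim, replacing the exact subgradient inequality \eqref{subgradient inequality} by the approximate-subgradient condition \eqref{approximate subgradient definition} and the objective value $f(x^k)$ by the surrogate value $F^k$ throughout. The essential observation is that the definition of an approximate subgradient is deliberately constructed so that $F^k$ plays exactly the role that $f(x^k)$ plays in Lemmas~\ref{first_lemma_introducing_idea} and~\ref{second_lemma_introducing_idea}; once this correspondence is in place, both lemmas carry over with no structural change.

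First I would prove an approximate-subgradient analog of Lemma~\ref{first_lemma_introducing_idea}. If the stepsize $s^\kappa$ makes $x^\star$ violate the $\kappa$-th inequality of \eqref{PSVD for surrogate subgradient}, i.e.
\[
    \left(\tilde{g}^\kappa\right)^{\top} x^\star > \left(\tilde{g}^\kappa\right)^{\top} x^\kappa - \tfrac{1}{\bar{\gamma}} s^\kappa \left\|\tilde{g}^\kappa\right\|^2,
\]
then rearranging yields $s^\kappa > \bar{\gamma}\,(\tilde{g}^\kappa)^{\top}(x^\kappa - x^\star)/\|\tilde{g}^\kappa\|^2$. Applying \eqref{approximate subgradient definition} in the rearranged form $(\tilde{g}^\kappa)^{\top}(x^\kappa - x^\star) \geq F^\kappa - f^\star$ then produces \eqref{T3.3_1} at that index. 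Because the PSVD problem \eqref{PSVD for surrogate subgradient} is infeasible, $x^\star \in \mathcal{X}^\star$ cannot satisfy all of its inequalities, so such a violating index $\kappa \in \{k(j),\dots,k(j)+\eta\}$ must exist, which establishes \eqref{T3.3_1}.

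Next I would substitute the surrogate stepsize \eqref{polyak stepsize surrogate subgradient} into \eqref{T3.3_1}, cancel $\|\tilde{g}^\kappa\|^2$, and rearrange to obtain the approximate analog of Lemma~\ref{second_lemma_introducing_idea}, namely $\frac{\gamma}{\bar{\gamma}}\bar{f}_{k(j)+\eta} + (1 - \frac{\gamma}{\bar{\gamma}})F^\kappa < f^\star$, where I use that the level value is held fixed over the window so that $\bar{f}_\kappa = \bar{f}_{k(j)+\eta}$. Since $1 - \gamma/\bar{\gamma} > 0$, replacing $F^\kappa$ by $\min_{\kappa} F^\kappa$ only decreases the left-hand side, so \eqref{level adjustment formula surrogate subgradient} still satisfies $\bar{f}^{\prime} < f^\star$. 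For the lower bound I would invoke the requirement $\bar{f}_k < F^k$ built into the definition of the approximate subgradient, together with the constancy of the level, to conclude $\min_\kappa F^\kappa > \bar{f}_{k(j)+\eta}$; because $\gamma < \bar{\gamma}$ makes \eqref{level adjustment formula surrogate subgradient} a genuine convex combination of $\bar{f}_{k(j)+\eta}$ and a strictly larger quantity, it follows that $\bar{f}^{\prime} > \bar{f}_{k(j)+\eta}$.

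I do not expect a genuine technical obstacle, as the approximate-subgradient condition was engineered precisely to make this chain of inequalities close. The two points needing care are (i) applying \eqref{approximate subgradient definition} with the correct orientation and with $F^\kappa$ in place of $f(x^\kappa)$, since the relaxed left-hand side $f^\star - F^\kappa$ is exactly what preserves the inequality direction, and (ii) verifying that $F^\kappa > \bar{f}_{k(j)+\eta}$ holds for \emph{every} $\kappa$ in the window, which is guaranteed because the level value is unchanged from $k(j)$ through $k(j)+\eta$ and is what secures the strict lower bound $\bar{f}^{\prime} > \bar{f}_{k(j)+\eta}$.
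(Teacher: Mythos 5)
Your proposal is correct and follows essentially the same route as the paper's proof: infeasibility of \eqref{PSVD for surrogate subgradient} forces $x^{\star}$ to violate some inequality, rearranging and applying \eqref{approximate subgradient definition} yields \eqref{T3.3_1}, and substituting the stepsize \eqref{polyak stepsize surrogate subgradient} followed by taking the minimum over the window gives $\bar{f}^{\prime} < f^{\star}$, with the lower bound $\bar{f}_{k(j)+\eta} < \bar{f}^{\prime}$ from $F^{\kappa} > \bar{f}_{\kappa}$ and the convex-combination structure. The paper itself frames its proof as the Theorem~\ref{Theorem 2.3} argument with $F^{\kappa}$ in place of $f(x^{\kappa})$, which is exactly your plan, and your two points of care (the orientation of \eqref{approximate subgradient definition} and the constancy of the level over the window) are precisely the steps the paper relies on.
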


\begin{proof}
The above theorem can be proven following the same idea as in Theorem \ref{Theorem 2.3}. Since the PSVD problem \eqref{PSVD for surrogate subgradient} admits no feasible solution, there must exist $\kappa \in \{k(j), k(j)+1, \dots, k(j)+\eta\}$ such that
 
\begin{equation}
    \left(\tilde{g}^{\kappa}\right)^{\top} x^{\star} > \left(\tilde{g}^{\kappa}\right)^{\top} x^{\kappa}-\frac{1}{\bar{\gamma}} s^{\kappa}\left\|\tilde{g}^{\kappa}\right\|^2,
\end{equation}
with $x^{\star} \in \mathcal{X}^{\star}$. 
By rearranging the terms of the above inequality, we have
\begin{equation}
     s^{\kappa}  \left\|\tilde{g}^{\kappa}\right\|^2 > \bar{\gamma} \cdot  \left(\tilde{g}^{\kappa}\right)^{\top} \left(x^{\kappa}-x^{\star}\right) .
\end{equation}
Since $\tilde{g}^{\kappa}$ satisfies \eqref{approximate subgradient definition}, the above inequality can be further written as
\begin{equation}
     s^{\kappa}  \left\|\tilde{g}^{\kappa}\right\|^2 > \bar{\gamma} \cdot  \left(\tilde{g}^{\kappa}\right)^{\top} \left(x^{\kappa}-x^{\star}\right) \geq
     \bar{\gamma} \cdot \left( F^\kappa-f^{\star} \right).
\end{equation}
This establishes inequality \eqref{T3.3_1}.

In the following, we will show $\bar{f}_{k(j)+\eta}<\bar{f}^{\prime}< f^{\star}$. Substituting  $s^\kappa$ in \eqref{T3.3_1} with the Polyak stepsize given by \eqref{polyak stepsize surrogate subgradient}, we obtain 
\begin{equation}
    f^{\star}> \left( 1- \frac{\gamma}{\bar{\gamma}}\right) F^\kappa + \frac{\gamma}{\bar{\gamma}} \bar{f}_{k(j)+\eta}.
\end{equation}
Since $\kappa \in\left\{k(j), \ldots, k(j)+\eta\right\}$, taking the minimum over all $\{k(j), \ldots, k(j)+\eta\}$ yields 
\begin{equation}
    f^{\star}> \left( 1- \frac{\gamma}{\bar{\gamma}}\right) \min _{\kappa \in\{k(j), \dots, k(j)+\eta\}}F^\kappa + \frac{\gamma}{\bar{\gamma}} \bar{f}_{k(j)+\eta},
\end{equation}
where the right-hand side term is $\bar{f}^{\prime}$. It is thus shown that $\bar{f}^{\prime} < f^{\star}$. Finally, since $\min _{\kappa \in\{k(j), \dots, k(j)+\eta\}}F^\kappa$ is strictly greater than the level value and \eqref{level adjustment formula surrogate subgradient} is a convex combination, it follows that
$\bar{f}_{k(j)+\eta} < \bar{f}^{\prime}$. This completes the proof.
\end{proof}

According to the above theorem, when using approximate subgradients, the PSVD problem can trigger the adjustment of the level value. The following proposition and lemma are instrumental in proving the convergence of level and objective function values. 
\begin{proposition} 
\label{Prop 3.1}
Any solution $x^{\prime} \in \mathbb{R}^n$ that is feasible for the PSVD problem \eqref{PSVD for surrogate subgradient} is also a feasible solution to the following constraint satisfaction problem:
\begin{equation}
\label{prop3.1_1}
    \left\{\begin{aligned}
    \left\|x-x^{k(j)+1}\right\|^2-\left\|x-x^{k(j)}\right\|^2 & \leq\left(1-\frac{2}{\bar{\gamma}}\right)\left(s^{k(j)}\right)^2\left\|\tilde{g}^{k(j)}\right\|^2, \\
    \left\|x-x^{k(j)+2}\right\|^2-\left\|x-x^{k(j)+1}\right\|^2 & \leq\left(1-\frac{2}{\bar{\gamma}}\right)\left(s^{k(j)+1}\right)^2\left\|\tilde{g}^{k(j)+1}\right\|^2, \\
    \vdots & \\
    \left\|x-x^{k(j)+\eta+1}\right\|^2-\left\|x-x^{k(j)+\eta}\right\|^2 & \leq\left(1-\frac{2}{\bar{\gamma}}\right)\left(s^{k(j)+\eta}\right)^2\left\|\tilde{g}^{k(j)+\eta}\right\|^2,
    \end{aligned}\right.
\end{equation}
\end{proposition}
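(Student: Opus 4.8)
The plan is to reproduce the argument of Proposition~\ref{Prop 2.1} line by line, with the exact subgradients $g^k$ replaced by the approximate subgradients $\tilde{g}^k$ and the update \eqref{update equation} replaced by \eqref{update equation with surrogate subgradient}. Because both systems \eqref{PSVD for surrogate subgradient} and \eqref{prop3.1_1} decouple into independent rows indexed by $\kappa \in \{k(j),\dots,k(j)+\eta\}$, it suffices to prove the implication for a single row and then apply it to each of the $\eta+1$ rows.

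First I would convert the $\kappa$-th linear inequality of \eqref{PSVD for surrogate subgradient} into an equivalent quadratic form. Starting from $(\tilde{g}^{\kappa})^{\top}x \leq (\tilde{g}^{\kappa})^{\top}x^{\kappa} - \tfrac{1}{\bar{\gamma}}s^{\kappa}\|\tilde{g}^{\kappa}\|^2$, I multiply through by $-2s^{\kappa}$ (which reverses the inequality since $s^{\kappa}>0$) and add $(s^{\kappa})^2\|\tilde{g}^{\kappa}\|^2$ to both sides, obtaining
\[
-2 s^{\kappa}\left(x^{\kappa}-x\right)^{\top}\tilde{g}^{\kappa} + \left(s^{\kappa}\right)^2\|\tilde{g}^{\kappa}\|^2 \leq \left(1-\tfrac{2}{\bar{\gamma}}\right)\left(s^{\kappa}\right)^2\|\tilde{g}^{\kappa}\|^2,
\]
which is the approximate-subgradient analog of \eqref{prop2.1_2}.

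Next I would apply the non-expansive property of $P_{\mathcal{X}}$ to the update \eqref{update equation with surrogate subgradient}. Writing $x^{\kappa+1}=P_{\mathcal{X}}(x^{\kappa}-s^{\kappa}\tilde{g}^{\kappa})$ and expanding $\|x^{\kappa}-s^{\kappa}\tilde{g}^{\kappa}-x\|^2$, I get
\[
\left\|x^{\kappa+1}-x\right\|^2 - \left\|x^{\kappa}-x\right\|^2 \leq -2 s^{\kappa}\left(x^{\kappa}-x\right)^{\top}\tilde{g}^{\kappa} + \left(s^{\kappa}\right)^2\|\tilde{g}^{\kappa}\|^2,
\]
whose right-hand side is exactly the left-hand side of the rewritten PSVD inequality. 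Chaining the two displays shows that any $x^{\prime}$ satisfying the $\kappa$-th PSVD inequality also satisfies $\|x^{\prime}-x^{\kappa+1}\|^2 - \|x^{\prime}-x^{\kappa}\|^2 \leq (1-\tfrac{2}{\bar{\gamma}})(s^{\kappa})^2\|\tilde{g}^{\kappa}\|^2$, i.e., the corresponding row of \eqref{prop3.1_1}. Running this over all $\kappa \in \{k(j),\dots,k(j)+\eta\}$ finishes the argument.

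I expect no substantive obstacle, since the two ingredients—a purely algebraic linear-to-quadratic rearrangement and the non-expansiveness of the projection—transfer verbatim from the exact to the approximate setting; in particular, neither the approximate-subgradient definition \eqref{approximate subgradient definition}, nor Condition~\ref{condition on Fk}, nor the stepsize form \eqref{polyak stepsize surrogate subgradient} plays any role here, so the proposition is genuinely a structural restatement of Proposition~\ref{Prop 2.1}. The only point demanding mild care is the projection step, which relies on $x$ being a fixed point of $P_{\mathcal{X}}$; as in Proposition~\ref{Prop 2.1}, I would therefore carry out the chaining for $x \in \mathcal{X}$, so that $P_{\mathcal{X}}(x)=x$ and $\|x^{\kappa+1}-x\| \leq \|x^{\kappa}-s^{\kappa}\tilde{g}^{\kappa}-x\|$ holds exactly.
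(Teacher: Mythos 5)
Your proposal is correct and takes exactly the paper's approach: the paper omits the proof of Proposition~\ref{Prop 3.1}, stating it is a natural extension of Proposition~\ref{Prop 2.1}, and your row-by-row argument---the linear-to-quadratic rearrangement followed by non-expansiveness of $P_{\mathcal{X}}$ applied to the update \eqref{update equation with surrogate subgradient}---is precisely the paper's proof of Proposition~\ref{Prop 2.1} transplanted to the approximate-subgradient setting, with the correct observation that neither \eqref{approximate subgradient definition}, Condition~\ref{condition on Fk}, nor the stepsize form enters the argument. Your closing caveat that the projection step needs $x \in \mathcal{X}$ (so that $P_{\mathcal{X}}(x)=x$) is likewise faithful to the paper, whose own proof of Proposition~\ref{Prop 2.1} asserts the non-expansiveness inequality only for $x \in \mathcal{X}$ even though the proposition is stated for $x^{\prime} \in \mathbb{R}^n$.
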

where $x \in \mathbb{R}^n$ is a decision vector.

As the above proposition is a natural extension of Proposition \ref{Prop 2.1}, the proof is omitted for brevity.

\begin{lemma}
\label{infinitely_offen_surrogate_subgradient}
With approximate subgradients and PSADLA, the PSVD problem \eqref{PSVD for surrogate subgradient} becomes infeasible infinitely often as $k \rightarrow \infty$. 
\end{lemma}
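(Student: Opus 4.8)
The plan is to mirror the contradiction argument of Lemma~\ref{Lemma-subgradient-infinitely-often}, substituting the approximate subgradients $\tilde{g}^k$ for the exact subgradients $g^k$ and invoking Proposition~\ref{Prop 3.1} in place of Proposition~\ref{Prop 2.1}. First I would assume, for contradiction, that there is an iteration $k^{\prime}$ beyond which the PSVD problem~\eqref{PSVD for surrogate subgradient} always admits a feasible point $x^{\prime}$. By Proposition~\ref{Prop 3.1}, this $x^{\prime}$ then satisfies the quadratic inequalities of~\eqref{prop3.1_1} for every $\kappa \geq k^{\prime}$. Telescoping these and using that squared distances are nonnegative together with $1 - 2/\bar{\gamma} < 0$ (which holds since $\bar{\gamma} < 2$), I conclude that $\sum_{\kappa \geq k^{\prime}}(s^\kappa)^2\|\tilde{g}^\kappa\|^2$ is summable, hence $\lim_{\kappa\to\infty}(s^\kappa)^2\|\tilde{g}^\kappa\|^2 = 0$. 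This portion is a direct transcription of the exact-subgradient proof.

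The contradiction then comes from a uniform positive lower bound on the same quantity. Since no level adjustment occurs after $k^{\prime}$, the level is frozen at some $\bar{f}^{\prime}$ with $\bar{f}^{\prime} < f^\star$, and substituting the stepsize~\eqref{polyak stepsize surrogate subgradient} gives $(s^\kappa)^2\|\tilde{g}^\kappa\|^2 = \gamma^2 (F^\kappa - \bar{f}^{\prime})^2 / \|\tilde{g}^\kappa\|^2$, so it suffices to bound $F^\kappa - \bar{f}^{\prime}$ away from zero uniformly. This is the crux of the argument, and the step where the approximate case genuinely departs from the exact one: because $F^\kappa$ may lie strictly below $f^\star$, the trivial bound $f(x^\kappa) \geq f^\star$ used before is no longer available. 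I would instead split into two cases. If $F^\kappa = f(x^\kappa)$, then $x^\kappa \in \mathcal{X}$ gives $F^\kappa = f(x^\kappa) \geq f^\star > \bar{f}^{\prime}$, so $F^\kappa - \bar{f}^{\prime} \geq f^\star - \bar{f}^{\prime}$. If instead $F^\kappa \neq f(x^\kappa)$, then Condition~\ref{condition on Fk} yields $F^\kappa - \bar{f}_{\kappa} \geq \varepsilon$, and since $\bar{f}_\kappa = \bar{f}^{\prime}$ for $\kappa \geq k^{\prime}$, this gives $F^\kappa - \bar{f}^{\prime} \geq \varepsilon$. Setting $\beta := \min\{\varepsilon,\, f^\star - \bar{f}^{\prime}\} > 0$, both cases yield $F^\kappa - \bar{f}^{\prime} \geq \beta$ for all $\kappa \geq k^{\prime}$.

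Combining this gap with the boundedness assumption $\|\tilde{g}^\kappa\| \leq \tilde{C}$ produces $(s^\kappa)^2\|\tilde{g}^\kappa\|^2 \geq \gamma^2\beta^2/\tilde{C}^2 > 0$ for all $\kappa \geq k^{\prime}$, which contradicts the vanishing limit established above; the contradiction completes the proof. I expect the main obstacle to be precisely the case distinction that secures the uniform gap $\beta$: Condition~\ref{condition on Fk} is exactly what prevents the stepsize from collapsing when $F^\kappa < f^\star$, and recognizing that this condition (rather than the automatic bound $f(x^\kappa) \geq f^\star$) is the correct tool is the essential new ingredient. The remaining manipulations are routine adaptations of the exact-subgradient argument.
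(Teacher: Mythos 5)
Your proposal is correct and follows essentially the same route as the paper's proof: the same contradiction setup, the same telescoping of the quadratic inequalities from Proposition~\ref{Prop 3.1} to force $(s^k)^2\|\tilde{g}^k\|^2 \to 0$, and the same two-case use of Condition~\ref{condition on Fk} (with $F^\kappa = f(x^\kappa) \geq f^{\star}$ in one case and $F^\kappa - \bar{f}_\kappa \geq \varepsilon$ in the other) to obtain the uniform lower bound $\min(\varepsilon,\beta)$ that yields the contradiction. If anything, your version is marginally more careful than the paper's, which drops the harmless factor $\gamma^2$ when substituting the stepsize \eqref{polyak stepsize surrogate subgradient} into $(s^k)^2\|\tilde{g}^k\|^2$.
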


\begin{proof} 
Assume, for the sake of contradiction, that there exists an iteration $k^{\prime} $ after which the PSVD problem \eqref{PSVD for surrogate subgradient} always remains feasible. In other words, for $k \geq k^{\prime}$, there exists $x^{\prime}\in \mathbb{R}^n$, which is feasible to the PSVD problem. 
From Proposition \ref{Prop 3.1}, the following inequalities hold
\begin{equation}
\label{lem3.1_1}
    \left\|x^{\prime}-x^{\kappa+1}\right\|^2-\left\|x^{\prime}-x^{\kappa}\right\|^2 \leq\left(1-\frac{2}{\bar{\gamma}}\right)\left(s^{\kappa}\right)^2\left\|\tilde{g}^{\kappa}\right\|^2, \forall \kappa\in\{k^{\prime}, \dots, k\}.
\end{equation}
Summing the above inequalities and letting $k\rightarrow \infty$, we obtain
\begin{equation}
    \left\|x^{\prime}-x^{\infty}\right\|^2-\left\|x^{\prime}-x^{k^{\prime}}\right\|^2 \leq\left(1-\frac{2}{\bar{\gamma}}\right)\lim _{k \rightarrow \infty}\left(s^{k}\right)^2\left\|\tilde{g}^{k}\right\|^2.
\end{equation}
Since $\left\|x^{\prime}-x^{\infty}\right\|^2$ is non-negative and $\|x^{\prime}-x^{k^{\prime}}\|^{2}$ is bounded above, it follows that
\begin{equation}\label{Lemma 3.1 contradiction part 1}
    \lim _{k \rightarrow \infty}\left(s^k\right)^2\left\|\tilde{g}^k\right\|^2=0.
\end{equation}
In the following, a contradiction is shown. Since the PSVD problem is feasible for $k\geq k^{\prime}$, the level value is not adjusted after $k^{\prime}$. Therefore, there exists a positive constant $\beta$ such that $f^{\star}-\bar{f}_k \geq \beta$ for all $k\geq k^{\prime}$. 
By the assumption that Condition \ref{condition on Fk} is satisfied, either $F^k=f(x^k)$ or $\bar{f}_{k}+\varepsilon \leq F^k $ is satisfied. In either case, it follows that:
\begin{equation}
    F^k-\bar{f}_k \geq \min(\varepsilon,\beta),\quad \forall k \geq k^{\prime}.
\end{equation}
With the above inequality and the boundedness assumption, the following inequality holds for any $k \geq k^{\prime}$:
\begin{equation}
    \left(s^k\right)^2\left\|\tilde{g}^k\right\|^2
    = \frac{\left(F^k-\bar{f}_k\right)^2}{\left\|\tilde{g}^k\right\|^2} 
    \geq
    \frac{\min(\varepsilon,\beta)^2}{\tilde{C}^2}.
\end{equation}
This implies that $(s^k)^2 \|\tilde{g}^k\|^2$ is bounded away from zero by $\frac{\min(\varepsilon,\beta)^2}{\tilde{C}^2}$.  However, this directly contradicts our earlier result that $\lim_{k \to \infty} (s^k)^2 \|\tilde{g}^k\|^2 = 0$.
The contradiction arises from the initial assumption that the problem remains always feasible after  $k^{\prime}$. Therefore, this assumption must be false. The proof is complete. 
\end{proof} 

\begin{theorem}
\label{Theorem 3.6} 
With approximate subgradients and PSADLA, the level value converges as
\begin{equation} \label{limit of level}
    \lim _{k \rightarrow \infty} \bar{f}_{k}=f^{\star}.
\end{equation}
Moreover, we have
\begin{equation} \label{limit of f}
    \lim _{k \rightarrow \infty}\left\{\inf _{\kappa \leq k} f\left(x^\kappa\right)\right\}=f^{\star}.
\end{equation}
\end{theorem}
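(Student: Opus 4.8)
The plan is to reproduce the architecture of the proof of Theorem~\ref{Theorem 2.4}, substituting at each step the approximate-subgradient counterparts: Lemma~\ref{infinitely_offen_surrogate_subgradient} in place of the exact infinite-infeasibility lemma, Theorem~\ref{Theorem 3.1} in place of Theorem~\ref{Theorem 2.3}, and the update rule \eqref{level adjustment formula surrogate subgradient} (which aggregates the surrogate values $F^\kappa$ rather than $f(x^\kappa)$). The single genuinely new ingredient is Condition~\ref{condition on Fk}, which I expect to be the crux and which I invoke in both halves of the argument to bridge between the quantities $F^\kappa$ that the algorithm actually manipulates and the objective values $f(x^\kappa)$ that the conclusion concerns.

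For \eqref{limit of level}, I would first note that by Lemma~\ref{infinitely_offen_surrogate_subgradient} the level is updated infinitely often, so it suffices to show $\bar{f}_{k(j)} \to f^\star$ along the update subsequence. By Theorem~\ref{Theorem 3.1} this subsequence is strictly increasing and bounded above by $f^\star$, hence converges to some $L \le f^\star$. To rule out $L < f^\star$, I argue by contradiction: assuming $f^\star - \bar{f}_{k(j)} \ge \rho > 0$ for all large $j$, I bound the per-update increment using \eqref{level adjustment formula surrogate subgradient}, namely $\bar{f}_{k(j+1)} - \bar{f}_{k(j)} = (1-\tfrac{\gamma}{\bar{\gamma}})\big(\min_\kappa F^\kappa - \bar{f}_{k(j)}\big)$. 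Here is where the exact-subgradient proof used $f(x^\kappa) \ge f^\star$ for free; in the present setting I instead invoke Condition~\ref{condition on Fk}: for each $\kappa$ in the block either $F^\kappa = f(x^\kappa) \ge f^\star$, giving $F^\kappa - \bar{f}_{k(j)} \ge \rho$, or $F^\kappa \ge \bar{f}_{k(j)} + \varepsilon$, giving $F^\kappa - \bar{f}_{k(j)} \ge \varepsilon$; in either case $\min_\kappa F^\kappa - \bar{f}_{k(j)} \ge \min(\rho,\varepsilon) > 0$. Summing the resulting uniformly positive increments over the infinitely many updates forces $\bar{f}_{k(j)} \to \infty$, contradicting the upper bound $f^\star$. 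Hence $L = f^\star$, and since the level is constant between updates, \eqref{limit of level} follows.

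For \eqref{limit of f}, I would pass to the limit $j \to \infty$ in \eqref{level adjustment formula surrogate subgradient}; substituting the just-established $\bar{f}_{k(j)} \to f^\star$ and cancelling the common factor $1-\tfrac{\gamma}{\bar{\gamma}} \ne 0$ yields $\lim_{j\to\infty}\min_{\kappa}F^\kappa = f^\star$. Let $\kappa_j$ denote a block minimizer, so $F^{\kappa_j} \to f^\star$. The remaining obstacle is that $F^{\kappa_j} \le f(x^{\kappa_j})$ only lower-bounds the objective, so convergence of $F$ does not by itself transfer to $f$. I close this gap with Condition~\ref{condition on Fk} a second time: if $F^{\kappa_j} \ne f(x^{\kappa_j})$ held for all large $j$, then $F^{\kappa_j} \ge \bar{f}_{k(j)} + \varepsilon \to f^\star + \varepsilon$, contradicting $F^{\kappa_j} \to f^\star$; therefore $F^{\kappa_j} = f(x^{\kappa_j})$ for infinitely many $j$, and along that sub-subsequence $f(x^{\kappa_j}) = F^{\kappa_j} \to f^\star$. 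Since $f(x^\kappa) \ge f^\star$ for every $\kappa$, the existence of a subsequence of objective values attaining $f^\star$ in the limit yields $\inf_{\kappa \le k} f(x^\kappa) \to f^\star$, establishing \eqref{limit of f}.

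The main obstacle, as flagged above, is precisely the decoupling of $F^\kappa$ from $f(x^\kappa)$: unlike the exact case where $f(x^\kappa) \ge f^\star$ makes both the increment lower bound and the final transfer automatic, the approximate setting requires Condition~\ref{condition on Fk} to supply (i) a uniform positive gap $\min(\rho,\varepsilon)$ in the contradiction for the level, and (ii) infinitely many indices at which the surrogate value coincides with the true objective. Everything else is a routine transcription of the Theorem~\ref{Theorem 2.4} argument with $F^\kappa$ replacing $f(x^\kappa)$ and the surrogate Proposition and Lemma replacing their exact analogues.
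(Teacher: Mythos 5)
Your proposal is correct and takes essentially the same route as the paper's proof: the same reduction via Lemma~\ref{infinitely_offen_surrogate_subgradient}, the same contradiction argument with the uniform per-update increment $\left(1-\frac{\gamma}{\bar{\gamma}}\right)\min(\rho,\varepsilon)$ extracted from Condition~\ref{condition on Fk}, and the same second use of Condition~\ref{condition on Fk} to rule out the $\varepsilon$-gap case at the block minimizers so that $F$ coincides with $f$ along a subsequence. The only cosmetic difference is that the paper concludes $F^{k^{\prime}(j)}=f\left(x^{k^{\prime}(j)}\right)$ for all sufficiently large $j$ while you settle for infinitely many $j$; either version suffices to establish \eqref{limit of f}.
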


\begin{proof} \textbf{Proof for \eqref{limit of level}:} 
First, we rewrite the level-update formula \eqref{level adjustment formula surrogate subgradient} in terms of $k(j)$ and $k(j+1)$ as
\begin{equation}\label{level adjustment formula surrogate subgradient with kj expression}
    \bar{f}_{k(j+1)}=\frac{\gamma}{\bar{\gamma}} \bar{f}_{k(j)}+\left(1-\frac{\gamma}{\bar{\gamma}}\right) \min _{\kappa \in\{k(j), \dots, k(j+1)-1\}}F^{\kappa}.
\end{equation}
By Lemma \ref{infinitely_offen_surrogate_subgradient}, the level value is adjusted infinitely often as $k \to \infty$. Since the level value remains unchanged unless the PSVD problem is infeasible, proving \eqref{limit of level} is equivalent to showing that $\lim_{j \to \infty} \bar{f}_{k(j)} = f^{\star}$. 
From Theorem \ref{Theorem 3.1}, we know that the sequence $\{\bar{f}_{k(j)}\}_{j=1}^{\infty}$ is monotonically increasing and is bounded above by $f^{\star}$. Thus, it must have a finite limit. We proceed by contradiction to show that this limit is $f^{\star}$.
Assume the limit is strictly less than $f^{\star}$. Then there exists a sufficiently large $k^{\prime}$ and a positive scalar $\bar{\beta}$ such that $f^{\star}-\bar{f}_{k(j)} \geq \bar{\beta}$ holds for any $j$ that $k(j) \geq k'$. Together with Condition \ref{condition on Fk}, this implies that the following inequality holds for any $j$ that $k(j) \geq k'$:
\begin{equation}
     \min _{\kappa \in\left\{k(j), k(j)+1, \ldots, k(j+1)-1\right\}}F^\kappa -\bar{f}_{k(j)} \geq \min(\varepsilon,\bar{\beta}). 
\end{equation}
Following a similar line of reasoning as in the proof of Theorem \ref{Theorem 2.4}, we find that for  any $j$ that $ k(j) \geq k^{\prime}$
\begin{equation}
\begin{aligned}
    \bar{f}_{k(j+1)}-\bar{f}_{k(j)}
    &=\left(1-\frac{\gamma}{\bar{\gamma}}\right) \left(\min_{\kappa \in\left\{k(j), k(j)+1, \ldots, k(j+1)-1\right\}} F^\kappa -\bar{f}_{k(j)} \right)\\ 
    &\geq \left(1-\frac{\gamma}{\bar{\gamma}}\right) \min(\varepsilon,\bar{\beta}).
\end{aligned}
\end{equation}
Summing these inequalities over all $j$ with $k(j) \geq k^{\prime}$, we get
\begin{equation}
    \sum_{j: k(j) \geq k^{\prime}}^{+\infty}\left(\bar{f}_{k(j+1)}-\bar{f}_{k(j)}\right) 
    = +\infty,
\end{equation}
which implies that $\{\bar{f}_{k(j)}\}_{j=1}^{\infty}$ is unbounded. This contradicts the earlier conclusion that it is bounded by $f^{\star}$. Hence, our assumption must be false, and we conclude that
$\bar{f}_{k(j)} \rightarrow f^{\star}$ as $j \rightarrow \infty$. This proves \eqref{limit of level}.

\textbf{Proof for \eqref{limit of f}:} 
To prove the convergence stated in \eqref{limit of f}, we proceed similarly to the proof of Theorem \ref{Theorem 2.4}. Letting $j \to \infty$ in \eqref{level adjustment formula surrogate subgradient}, we obtain
\begin{equation}
    \lim _{j \rightarrow \infty} \left\{\min _{\kappa \in\left\{k(j), k(j)+1, \ldots, k(j+1)-1\right\}}F^{\kappa} \right\}=f^{\star}.
\end{equation}
To simplify the expression, we denote the iteration at which the minimum is attained in the above formula as $k^{\prime}(j)$, i.e., $k^{\prime}(j)=\arg\min_{\kappa \in\{k(j), \ldots, k(j+1)-1\}} F^{\kappa}$. By Condition \ref{condition on Fk}, for each $j$ either $F^{k^{\prime}(j)} = f(x^{k^{\prime}(j)})$ or $\bar{f}_{k^{\prime}(j)}+\varepsilon \leq F^{k^{\prime}(j)}<f(x^{k^{\prime}(j)})$. Since both $\bar{f}_{k^{\prime}(j)}$ and $F^{k^{\prime}(j)}$ converge to $f^{\star}$, for sufficiently large $j$, the latter inequality cannot hold. We thus have 
\begin{equation}
    \lim _{j \rightarrow \infty} f\left(x^{k^{\prime}(j)}\right) =f^{\star}.
\end{equation}
Because $\{f(x^{k^{\prime}(j)})\}_{j=1}^{\infty}$ is a subsequence of $\{f(x^k)\}_{k=1}^{\infty}$ and $f(x^k)$ is bounded below by $f^{\star}$, it follows that
\begin{equation}
    \lim _{k \rightarrow \infty}\left\{\inf _{\kappa \leq k} f\left(x^\kappa\right)\right\}=f^{\star}. 
\end{equation}
This completes the proof.
\end{proof}

\subsection{Applications}
\label{subsection 3.2}

Having established convergence in the prior subsection, the attention of this subsection is on the practical considerations: how to calculate the approximate subgradient that satisfies \eqref{approximate subgradient definition} for a given problem while satisfying  
Condition \ref{condition on Fk}. Generic convex problems with additive objective functions are considered first. Then, the dual problems arising within Lagrangian Relaxation (LR) for Integer Programming (IP) are considered.

\subsubsection{Convex Problems with Additive Objective Functions}
In this subsection, we consider the following convex optimization problem with an additive objective function:
\begin{equation}\label{Convex_optimization_with_additive_objective}
\begin{aligned}
& \underset{x\in \mathbb{R}^n}{\min}
& & \sum_{i=1}^{I}f_{i}(x), & \text{s.t.}
& & x \in \mathcal{X},
\end{aligned}
\end{equation}
where $f_i$ is the $i$-th component of the objective function. For simplicity, we denote the entire objective function as $f(x) = \sum_{i=1}^{I}f_{i}(x)$. In the following, it is shown that calculating a subgradient for one or a few components is sufficient to obtain an approximate subgradient.
\begin{proposition}
Suppose $g_{i,\tau_i}$ is the subgradient of the $i$-th component obtained at iteration $\tau_i$ ($< k$) at point $x^{\tau_i}$. Given $x^{k}$ at iteration $k$, we calculate the subgradient for a subset of objective components $\Xi \subset \{1, \dots, I\}$. Then,
\begin{equation} \label{p3.2_1}\tilde{g}^{k}=\sum_{j\in\Xi}g_{j,k}+\sum_{i\notin\Xi} g_{i,\tau_i},
\end{equation}
and  
\begin{equation}
\label{surrogate_obj_value_defination}
   F^k=\sum_{j\in\Xi}f_j(x^{k})
    +\sum_{i\notin\Xi} \left(f_i(x^{\tau_i})-g_{i,\tau_i}^{\top}\left(x^{\tau_i}-x^{k}\right)\right)
\end{equation}
satisfy inequality \eqref{approximate subgradient definition}. If $F^k$ satisfies $\bar{f}_{k} < F^k \leq f(x^k)$, then $\tilde{g}^{k}$ is an approximate subgradient of $f$ at $x^k$.
\end{proposition}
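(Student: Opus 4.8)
The plan is to verify the defining inequality \eqref{approximate subgradient definition} directly by summing component-wise subgradient inequalities, and then to observe that the final clause is immediate from the hypothesis. The key observation is that each term appearing in $F^k$ and $\tilde{g}^k$ arises from lower-bounding the corresponding component $f_i(x^{\star})$ by an affine minorant, with all minorants re-centered at the common point $x^k$ so that their linear parts can be combined into a single inner product with $(x^{\star}-x^k)$.

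First, for each $j \in \Xi$, since $g_{j,k}$ is a genuine subgradient of $f_j$ at $x^k$, the subgradient inequality gives $f_j(x^{\star}) \geq f_j(x^k) + g_{j,k}^{\top}(x^{\star}-x^k)$ for every $x^{\star} \in \mathcal{X}^{\star}$. The constant term $f_j(x^k)$ is exactly the contribution of $j$ to $F^k$ in \eqref{surrogate_obj_value_defination}, and the coefficient $g_{j,k}$ is its contribution to $\tilde{g}^k$.

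The step requiring slightly more care is the treatment of the stale components $i \notin \Xi$. For these I would start from the subgradient inequality at the old point, $f_i(x^{\star}) \geq f_i(x^{\tau_i}) + g_{i,\tau_i}^{\top}(x^{\star}-x^{\tau_i})$, and then rewrite $x^{\star}-x^{\tau_i} = (x^{\star}-x^k) + (x^k - x^{\tau_i})$ to re-center the affine minorant at $x^k$. This yields $f_i(x^{\star}) \geq \big[f_i(x^{\tau_i}) - g_{i,\tau_i}^{\top}(x^{\tau_i}-x^k)\big] + g_{i,\tau_i}^{\top}(x^{\star}-x^k)$, and the bracketed constant is precisely the term defining $F^k$ for index $i$ in \eqref{surrogate_obj_value_defination}, while $g_{i,\tau_i}$ is its contribution to $\tilde{g}^k$. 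This re-centering is the heart of the argument: it reconciles minorants linearized at different historical points into a common description centered at $x^k$.

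Summing the $I$ resulting inequalities and using $f^{\star} = \sum_{i=1}^{I} f_i(x^{\star})$, the constant terms collect into $F^k$ and the linear terms collect into $(\tilde{g}^k)^{\top}(x^{\star}-x^k)$, giving $f^{\star} \geq F^k + (\tilde{g}^k)^{\top}(x^{\star}-x^k)$, which is exactly \eqref{approximate subgradient definition}. Finally, the extra requirement $\bar{f}_{k} < F^k \leq f(x^k)$ from the definition of an approximate subgradient is assumed in the hypothesis of the last sentence, so under that hypothesis $\tilde{g}^k$ qualifies as an approximate subgradient with associated value $F^k$. I do not anticipate a genuine obstacle beyond careful bookkeeping in the re-centering step; as a consistency check one may note that $F^k \leq f(x^k)$ actually holds automatically, since each re-centered minorant satisfies $f_i(x^{\tau_i}) - g_{i,\tau_i}^{\top}(x^{\tau_i}-x^k) \leq f_i(x^k)$ by the subgradient inequality evaluated at $x^k$.
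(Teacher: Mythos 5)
Your proof is correct and takes essentially the same route as the paper's: the paper expands $\big(\tilde{g}^k\big)^{\top}(x^{\star}-x^k)$ using exactly your split $x^{\star}-x^{\tau_i}=(x^{\star}-x^k)+(x^k-x^{\tau_i})$ together with the component-wise subgradient inequalities, so your summed-minorant argument is the same chain of inequalities read in the opposite direction. Your closing observation that $F^k \leq f(x^k)$ holds automatically (since each re-centered minorant satisfies $f_i(x^{\tau_i})-g_{i,\tau_i}^{\top}(x^{\tau_i}-x^k)\leq f_i(x^k)$) is a correct small addition the paper does not state.
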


\begin{proof}
Multiplying \eqref{p3.2_1} by $\left(x^{\star}-x^{k}\right)$ gives:
    \begin{equation}\begin{aligned}
    & \left(\tilde{g}^{k}\right)^{\top}\left(x^{\star}-x^{k}\right)
    =\left(\sum_{j\in\Xi}g_{j,k}\right)^{\top}\left(x^{\star}-x^{k}\right)+\left(\sum_{i\notin\Xi} g_{i,\tau_i}\right)^{\top}\left(x^{\star}-x^{k}\right)\\
    &=\sum_{j\in\Xi}\left(g_{j,k}\right)^{\top}\!\left(x^{\star}\!-\!x^{k}\right)\!+\!\sum_{i\notin\Xi} \left(g_{i,\tau_i}^{\top}\!\left(x^{\star}\!-\!x^{\tau_i}\right)+g_{i,\tau_i}^{\top}\left(x^{\tau_i}-x^{k}\right)\right), \forall x^{\star} \in \mathcal{X}^{\star}.
\end{aligned}\end{equation}
Since $g_{i,\tau_i}$ is a subgradient of the $i$-th component at $x^{\tau_i}$, we have 
\begin{equation}
\label{subgradient_componnent_subgradient_property}
    f_i(x) - f_i(x^{\tau_i}) \geq \big(g_{i,\tau_i}\big)^{\top}(x - x^{\tau_i}), \quad \forall x \in \mathcal{X},
\end{equation}
and the following inequality thus holds
\begin{equation}\begin{aligned}
    \left(\tilde{g}^{k}\right)^{\top}\!\!\left(x^{\star}\!-\!x^{k}\right) \!\leq\! \sum_{j\in\Xi}\!(f_j(x^{\star})\! -\! f_j(x^{k}))
    \!+\!\!\sum_{i\notin\Xi} \!\left(f_i(x^{\star}) \!-\! f_i(x^{\tau_i})\!+\! g_{i,\tau_i}^{\top}\!\!\left(x^{\tau_i}\!-\!x^{k}\right)\right).
\end{aligned}\end{equation}
Rearranging terms yields
\begin{equation}\begin{aligned}
    f^{\star} - F^{k} \geq \left(\tilde{g}^{k}\right)^{\top}\left(x^{\star}-x^{k}\right), 
\end{aligned}\end{equation}
which is precisely inequality \eqref{approximate subgradient definition}. The proof is complete.
\end{proof}

Condition \ref{condition on Fk} is instrumental for convergence and is satisfied by using the following algorithmic procedure:
Given an initial solution $x^0$, initial level $\bar{f}_0$, and $\varepsilon$ (initialized as a small positive scalar), subgradients $g_{i,0}$ of all the components at $x^0$ are calculated. The first solution $x^1$ is generated by using the subgradient at $x^0$ with a stepsize calculated as \eqref{polyak stepsize with estimation}. At iteration $k\geq 1$, given $x^k$ and $\bar{f}_k$, the subgradient $g_{i,\tau_i}$ components are calculated incrementally until the inequality $\bar{f}_{k}+\varepsilon \leq F^k$ in Condition \ref{condition on Fk} is satisfied. In the worst case, this inequality cannot be satisfied after calculating the subgradient for all the components.  In this case, $F^{k}=f(x^k)$, and Condition \ref{condition on Fk} holds. With the approximate subgradient calculated, 
the solution is then updated as $x^{k+1}=P_\mathcal{X}\left(x^k-s^k \tilde{g}^k\right)$ with the stepsize \eqref{polyak stepsize surrogate subgradient}, and the level value is adjusted based on the PSADLA approach.

The approximate subgradient method presented above offers a promising approach for efficiently solving convex optimization problems with additive objective functions, particularly when these functions involve a large number of summation components. Such problems frequently arise in a variety of practical applications. Specifically, network optimization problems are defined on a network with vertices and edges, and the objective function is sometimes additive in terms of nodes or edges \cite{lobel2010distributed,ram2009incremental,larsson1996conditional}. Moreover, an optimal control problem is generally formulated as a convex optimization problem to optimize a sequence of inputs for a dynamic system with a sequence of stages. Additive objective functions are usually used, as they can represent the cumulative cost or revenue of the system over a certain period \cite{zhang2023}.  Furthermore, a supervised learning approach trains a machine learning model to minimize the average loss on a training set \cite{caruana2006empirical}. The objective function is additive, with each component associated with a piece of data. Polyak stepsizes have recently been integrated with stochastic gradient descent approaches, showing promising results in solving such problems \cite{orvieto2022dynamics}. In our approximate subgradient approaches, to guarantee convergence, calculating the subgradient of one or a few components is enough, enabling the use of a mini-batch of data in each iteration. 

\subsubsection{Lagrangian Duals  
of Integer Programming Problems}
\label{subsection: application for dual problems}

In this subsection, we study the Lagrangian duals  of  Integer Programming (IP) problems, which have extensive applications in practical operational optimization. We first present a generic formulation of IP and then propose an approach for computing approximate subgradients while ensuring the satisfaction of Condition \ref{condition on Fk}.

A generic IP problem involves integer decision variables $y\in \mathcal{Y} \subseteq \mathbb{Z}^d$, and can be formulated as
\begin{equation}
\begin{aligned}
\underset{y\in \mathcal{Y}}{\max}
\quad & J(y), & \text{s.t.} \quad 
& g(y) \leq 0,
\label{ILP}
\end{aligned}
\end{equation}
where $J: \mathbb{Z}^d \mapsto \mathbb{R}$ is the objective function and $g: \mathbb{Z}^d \mapsto \mathbb{R}^n$ is the constraint function. Within the Lagrangian Relaxation (LR) framework, the constraints are relaxed by multipliers $x\in \mathbb{R}^{n}$, leading to the relaxed problem: 
\begin{equation}
\begin{aligned}
\underset{y\in \mathcal{Y}}{\max}
\quad  L\left(y,x\right) = J(y) + x^{\top} g\left(y\right), 
\label{relaxed problem}
\end{aligned}
\end{equation}
where $L$ is the Lagrangian function.  For any given set of multipliers, the optimal value of the relaxed problem \eqref{relaxed problem} serves as an upper bound on the optimal value of the original IP problem \eqref{ILP}. The goal of LR is to optimize the multipliers to find the ``best” upper bound, i.e., to minimize the following dual problem:
\begin{equation}\begin{aligned} \label{dual_problem_ILP}
&\underset{x\in\mathbb{R}^n}{\min}
\quad  f(x) =  \sup_{y\in \mathcal{Y}} L(y,x), \\
&\text{s.t.}
\quad  x \geq 0.
\end{aligned}\end{equation}
Since $\mathcal{Y}$ is a discrete set containing a finite number of points, the dual function $f$ is a piece-wise linear, non-smooth, convex function, with each linear segment corresponding to a point in $\mathcal{Y}$. Subgradient methods are thus typically used to optimize \eqref{dual_problem_ILP}. However, to compute a subgradient at $x^k$,  one needs to solve the relaxed problem \eqref{relaxed problem} to optimality, which can be computationally expensive. Below, we show that with our approximate subgradient, it suffices to obtain only a ``good enough" solution to the relaxed problem, thus reducing computational effort.

\begin{proposition}
\label{Prop_addtive_appro_subgradient}
For a given $x^k$, any feasible solution $\tilde{y}^k$ of the relaxed problem \eqref{relaxed problem} satisfies the following inequality:
\begin{equation}\label{approximation_subgradient_inequality}
f^{\star}-L(\tilde{y}^k, x^k) \geq g(\tilde{y}^k)^{\top}(x^{\star}-x^k), \quad\forall x^{\star}\in \mathcal{X}^{\star}.
\end{equation}
If $\tilde{y}^k$ yields a sufficiently high objective value such that 
\begin{equation} \label{dual_of_ip_optimization_condition}
    L(\tilde{y}^k, x^k) > \bar{f}^k,
\end{equation}
then $g(\tilde{y}^k)$ is an approximate subgradient of $f$ under our definition with $F^k=L(\tilde{y}^k, x^k)$ and  $\tilde{g}^k=g(\tilde{y}^k)$.
\end{proposition}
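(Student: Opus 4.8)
The plan is to prove Proposition~\ref{Prop_addtive_appro_subgradient} by verifying directly that the pair $(F^k, \tilde{g}^k) = (L(\tilde{y}^k, x^k), g(\tilde{y}^k))$ satisfies the approximate subgradient definition \eqref{approximate subgradient definition}. The central observation is that the dual function is defined as a supremum, $f(x) = \sup_{y \in \mathcal{Y}} L(y,x)$, so for any fixed feasible $\tilde{y}^k$ and any multiplier $x$, we have the pointwise bound $f(x) \geq L(\tilde{y}^k, x) = J(\tilde{y}^k) + x^{\top} g(\tilde{y}^k)$. This is the linear lower-support property that replaces the role of the exact subgradient inequality \eqref{subgradient inequality}, and it holds without solving the relaxed problem to optimality.

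First I would instantiate this lower bound at an optimal multiplier $x^{\star} \in \mathcal{X}^{\star}$, writing $f(x^{\star}) = f^{\star} \geq J(\tilde{y}^k) + (x^{\star})^{\top} g(\tilde{y}^k)$. Next I would rewrite $L(\tilde{y}^k, x^k) = J(\tilde{y}^k) + (x^k)^{\top} g(\tilde{y}^k)$, so that subtracting this from the previous inequality cancels the $J(\tilde{y}^k)$ term and yields $f^{\star} - L(\tilde{y}^k, x^k) \geq (x^{\star} - x^k)^{\top} g(\tilde{y}^k)$, which is exactly \eqref{approximation_subgradient_inequality} with $\tilde{g}^k = g(\tilde{y}^k)$ and $F^k = L(\tilde{y}^k, x^k)$. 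This establishes the core inequality \eqref{approximate subgradient definition} needed in the definition of an approximate subgradient.

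Finally I would check the bracketing requirement $\bar{f}_k < F^k \leq f(x^k)$ from the definition. The lower bound $F^k = L(\tilde{y}^k, x^k) > \bar{f}^k$ is precisely the hypothesis \eqref{dual_of_ip_optimization_condition} imposed on $\tilde{y}^k$. The upper bound $F^k = L(\tilde{y}^k, x^k) \leq \sup_{y\in\mathcal{Y}} L(y,x^k) = f(x^k)$ follows immediately from the definition of $f$ as a supremum over $\mathcal{Y}$, since $\tilde{y}^k \in \mathcal{Y}$ is one feasible point. With both bounds verified and \eqref{approximate subgradient definition} established, $g(\tilde{y}^k)$ qualifies as an approximate subgradient under the paper's definition, completing the proof.

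I do not expect any serious obstacle here: the result is essentially a direct unwinding of definitions, and the only subtlety worth stating carefully is that the argument requires $\tilde{y}^k$ to be feasible for the relaxed problem \eqref{relaxed problem} (so that it is a legitimate point of $\mathcal{Y}$ entering the supremum), rather than optimal. The practical payoff---that one needs only a ``good enough'' solution satisfying \eqref{dual_of_ip_optimization_condition} rather than a global maximizer of the relaxed problem---is exactly what the supremum-based support inequality buys us, and this is the conceptual point the proof should make transparent.
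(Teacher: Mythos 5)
Your proposal is correct and takes essentially the same route as the paper: both proofs rest on the single observation that $L(\tilde{y}^k, x^{\star}) \leq \sup_{y\in\mathcal{Y}} L(y,x^{\star}) = f^{\star}$ together with the linearity of $L$ in $x$, differing only in trivial algebraic arrangement (you subtract two support inequalities where the paper inserts $x^k - x^{\star} + x^{\star}$ in one chain). Your explicit verification of the upper bound $F^k \leq f(x^k)$ is a small completeness improvement over the paper, which leaves that step implicit.
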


\begin{proof}
The satisfaction of \eqref{approximation_subgradient_inequality} can be derived as follows:
\begin{equation}
\begin{aligned}
    L(\tilde{y}^k, x^k)&=J(\tilde{y}^k)+(x^k)^{\top} g(\tilde{y}^k)\\
    &=J(\tilde{y}^k)+(x^k-x^{\star}+x^{\star})^{\top} g(\tilde{y}^k)\\
    &=L(\tilde{y}^k, x^{\star}) + (x^k-x^{\star})^{\top} g(\tilde{y}^k)\\
    &\leq f^{\star} + (x^k-x^{\star})^{\top} g(\tilde{y}^k).\\
\end{aligned}
\end{equation}
By the definition of approximate subgradients, 
if $L(\tilde{y}^k, x^k) > \bar{f}^k$, then $g(\tilde{y}^k)$ acts as an approximate subgradient with $F^k=L(\tilde{y}^k, x^k)$. The proof is complete.
\end{proof}

Condition \ref{condition on Fk} can be enforced through the following algorithmic procedure to ensure convergence.  At each iteration $k$, for given $x^k$, level value $\bar{f}_k$, and the parameter $\varepsilon$ in Condition \ref{condition on Fk}, we solve the relaxed problem. If an objective value that is no less than $\bar{f}_k + \varepsilon$ is obtained, then the condition is satisfied and $x^k$ is updated as
$x^{k+1}=[x^{k}-s^{k}g(\tilde{y}^k)]^{+}$, where $s^k$ is calculated according to \eqref{polyak stepsize surrogate subgradient}. 
In the worst case, even if the relaxed problem is optimally solved, its objective value may not reach $\bar{f}_k+\varepsilon$. In this case, $F^k=f(x^k)$ and $x^k$ is updated based on the optimal relaxed problem solution as $x^{k+1}=[x^{k}-s^{k}g(\tilde{y}^{\star})]^{+}$.

Our approximate subgradient method offers a direction for efficiently solving practical operation optimization problems with integer decision variables. Most importantly, it can exploit the separable structure often present in practical operational optimization problems. In the following, we will detail this aspect. 
To ensure the balance of supply and demand while minimizing costs, power system unit commitment problems are solved daily to determine which generating units should be on or off and to decide their production levels. In manufacturing systems, scheduling problems are solved before each shift to determine the processing sequence of jobs, aiming to achieve objectives such as on-time deliveries \cite{pinedo2012scheduling}. These problems are generally separable \cite{nikolaidis2020enhanced, hou2023fast,LIUmlSLR}.
After relaxing the coupling constraints, the problem can be decomposed into a set of subproblems; that is, the relaxed problem takes the following form:
\begin{equation}
\begin{aligned}
&\max \quad  \sum_{i=1}^{I}L_{i}\left(y_i,x\right),\\  &\text{s.t.} \quad y_i\in \mathcal{Y}_i, \quad \forall i\in\{1,2,\dots,I\},
\end{aligned}
\end{equation}
where $L_{i}$ is the objective function of subproblem $i$, $y_i$ denotes the decision variables associated, and $\mathcal{Y}_i$ denotes the feasible set. As presented above, to calculate an approximate subgradient at a point, it is sufficient to obtain a ``good enough'' solution. For these problems with separable structures, only one or several subproblems need to be solved. Moreover, the optimal subproblem solution is not required, and exact algorithms such as Dynamic Programming or Branch-and-Cut are thus unnecessary. Instead, inexact yet computationally efficient methods, such as machine-learning-based methods \cite{LIUmlSLR} or heuristics \cite{liu2020ordinal}, can be employed to obtain  ``good enough" subproblem solutions. Furthermore, these problems generally need to be solved daily or hourly. Our approach is level-based. The level value from one solve can be used to warm start the next, leading to a high computational efficiency.

In the rest of this subsection, we compare our approach with the surrogate subgradient approach \cite{zhao1999surrogate, bragin2015convergence,bragin2022surrogate}, which is an efficient and popular approach for solving the dual problems \eqref{dual_problem_ILP} of various practical IP problems. We first summarize the key idea of the surrogate subgradient approach, and then we demonstrate that the surrogate subgradient essentially falls within the definition of our approximate subgradient.
In each iteration of the approach, the relaxed problem \eqref{relaxed problem} is approximately solved to satisfy the following Surrogate Optimality Condition (SOC) \cite[Eq. (28)]{zhao1999surrogate}:
\begin{equation} \label{Sorrogate_Optimality_Condition}
    L\left(\tilde{y}^k, x^k\right)>L\left(\tilde{y}^{k-1}, x^k\right).
\end{equation}
When a solution $\tilde{y}^{k}$ satisfying \eqref{Sorrogate_Optimality_Condition} is found, $g( \tilde{y}^k)$ becomes a ``surrogate subgradient” of $f$ at $x^k$ and is used to update $x^k$. If $\tilde{y}^{k-1}$ happens to be an optimum of \eqref{relaxed problem} for $x^k$, a solution $\tilde{y}^{k}$ that satisfies \eqref{Sorrogate_Optimality_Condition} does not exist. In this case, $\tilde{y}^k$ is set as $\tilde{y}^{k-1}$. The value of $L\left(\tilde{y}^k, x^k\right)$ is less than or equal to the corresponding dual value $f(x^k)$, and is referred to as the ``surrogate" dual value. The solution is then updated as $x^{k+1}=[x^k-s^k g( \tilde{y}^k)]^+$. In \cite{bragin2022surrogate}, the stepsize $s^k$ is calculated based on the level value as $s^k=\gamma \cdot \frac{L(\tilde{y}^k, x^k)-\bar{f}_{k}}{\left\|g( \tilde{y}^k)\right\|^2}, 0<\gamma<1$. Note here that $\gamma$ is less than 1.

The surrogate subgradient satisfies the definition of our approximate subgradient. This holds because, with the SOC \eqref{Sorrogate_Optimality_Condition} and the stepsize defined above, $L(\tilde{y}^k, x^k)$ is always greater than the level value. We formalize this result in the following proposition.

\begin{proposition}
Suppose that all the level values $\bar{f}_k$ for $k \in \{0,\dots,K\}$ are identical.
At iteration $k=0$, we start with an initial solution $x^0$ and $\tilde{y}^0$, which satisfies $L(\tilde{y}^0, x^0)>\bar{f}_0$. 
Using the surrogate subgradient and stepsize as described above, for any iteration $k > 0$, the surrogate dual value $L(\tilde{y}^k, x^k)$ remains greater than the level value $\bar{f}_k$. Moreover, the surrogate subgradient $g(\tilde{y}^k)$ satisfies 
\begin{equation}
f^{\star}-L(\tilde{y}^k, x^k) \geq g(\tilde{y}^k)^{\top}(x^{\star}-x^k), \quad\forall x^{\star}\in \mathcal{X}^{\star}.
\end{equation}
\end{proposition}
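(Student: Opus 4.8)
The plan is to separate the two assertions. The inequality $f^{\star}-L(\tilde{y}^k,x^k)\geq g(\tilde{y}^k)^{\top}(x^{\star}-x^k)$ is not genuinely new: since $\tilde{y}^k$ is a feasible solution of the relaxed problem \eqref{relaxed problem}, it is precisely the conclusion of Proposition \ref{Prop_addtive_appro_subgradient}, so it holds automatically once the first assertion is in place. I would therefore devote the real work to showing, by induction on $k$, that the surrogate dual value stays above the level, i.e.\ $L(\tilde{y}^k,x^k)>\bar{f}_k$ for every $k\geq 0$. The base case $k=0$ is given by hypothesis, and since all the level values coincide I abbreviate $\bar{f}:=\bar{f}_k$ throughout.

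For the inductive step I assume $L(\tilde{y}^k,x^k)>\bar{f}$ and track how the surrogate value changes when the multiplier is updated to $x^{k+1}=[x^k-s^k g(\tilde{y}^k)]^{+}$. The first observation is that $L(\tilde{y},x)=J(\tilde{y})+x^{\top}g(\tilde{y})$ is affine in $x$ for fixed $\tilde{y}$, giving the exact identity $L(\tilde{y}^k,x^{k+1})=L(\tilde{y}^k,x^k)+(x^{k+1}-x^k)^{\top}g(\tilde{y}^k)$. Everything then reduces to controlling the inner product $(x^{k+1}-x^k)^{\top}g(\tilde{y}^k)$. Absent the projection one would have $x^{k+1}-x^k=-s^k g(\tilde{y}^k)$ exactly, so $(x^{k+1}-x^k)^{\top}g(\tilde{y}^k)=-s^k\|g(\tilde{y}^k)\|^2$, and the next step is to show that projecting onto the nonnegative orthant can only increase this quantity.

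The main obstacle is exactly this projection bound, $(x^{k+1}-x^k)^{\top}g(\tilde{y}^k)\geq -s^k\|g(\tilde{y}^k)\|^2$. Writing $z:=x^k-s^k g(\tilde{y}^k)$ so that $x^{k+1}=[z]^{+}$ and $g(\tilde{y}^k)=(x^k-z)/s^k$, the gap between the projected and unprojected inner products equals $\tfrac{1}{s^k}(x^{k+1}-z)^{\top}(x^k-z)$, which I would evaluate componentwise: for coordinates with $z_i\geq 0$ the factor $x^{k+1}_i-z_i$ vanishes, while for $z_i<0$ both $x^{k+1}_i-z_i=-z_i$ and $x^k_i-z_i$ are strictly positive because every iterate satisfies $x^k\geq 0$ by construction of the update. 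Hence the gap is nonnegative and the claimed inequality follows, with $s^k>0$ guaranteed by the induction hypothesis through $s^k=\gamma(L(\tilde{y}^k,x^k)-\bar{f})/\|g(\tilde{y}^k)\|^2$.

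Substituting the stepsize into the projection bound yields $L(\tilde{y}^k,x^{k+1})\geq (1-\gamma)L(\tilde{y}^k,x^k)+\gamma\bar{f}$, a convex combination that is strictly greater than $\bar{f}$ precisely because $0<\gamma<1$ and $L(\tilde{y}^k,x^k)>\bar{f}$; this is where the requirement $\gamma<1$ is used in an essential way. Finally I would invoke the Surrogate Optimality Condition \eqref{Sorrogate_Optimality_Condition}: the chosen $\tilde{y}^{k+1}$ satisfies $L(\tilde{y}^{k+1},x^{k+1})>L(\tilde{y}^k,x^{k+1})$ when $\tilde{y}^k$ is not optimal for $x^{k+1}$, and $L(\tilde{y}^{k+1},x^{k+1})=L(\tilde{y}^k,x^{k+1})$ in the degenerate case $\tilde{y}^{k+1}=\tilde{y}^k$, so in either case $L(\tilde{y}^{k+1},x^{k+1})\geq L(\tilde{y}^k,x^{k+1})>\bar{f}=\bar{f}_{k+1}$. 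This closes the induction and, combined with Proposition \ref{Prop_addtive_appro_subgradient}, establishes both assertions.
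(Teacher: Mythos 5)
Your proof is correct and follows essentially the same route as the paper: the same induction on $k$ via the affine identity $L(\tilde{y}^k,x^{k+1})=L(\tilde{y}^k,x^k)+(x^{k+1}-x^k)^{\top}g(\tilde{y}^k)$, the projection bound $(x^{k+1}-x^k)^{\top}g(\tilde{y}^k)\geq -s^k\|g(\tilde{y}^k)\|^2$, substitution of the stepsize to get the convex combination $(1-\gamma)L(\tilde{y}^k,x^k)+\gamma\bar{f}$, the SOC to pass to $\tilde{y}^{k+1}$, and Proposition \ref{Prop_addtive_appro_subgradient} for the subgradient inequality. The only difference is cosmetic: the paper cites Lemma 2.1 of \cite{sun2007surrogate} for the projection bound, whereas you prove it directly with a correct componentwise argument (and you handle the degenerate case $\tilde{y}^{k+1}=\tilde{y}^k$ slightly more explicitly), which makes your version self-contained.
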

\begin{proof}
    By the definition of the Lagrangian function $L$, we have 
\begin{equation}\begin{aligned}
       L\left(\tilde{y}^{k}, x^{k+1}\right)=&J(\tilde{y}^{k}) + \left(x^{k+1}\right)^{\top}  g(\tilde{y}^{k})\\
       =&L\left(\tilde{y}^{k}, x^{k}\right) + \left(x^{k+1}-x^{k}\right)^{\top}  g(\tilde{y}^{k})\\
       =&L\left(\tilde{y}^{k}, x^{k}\right) + \left([x^{k}-s^{k} g( \tilde{y}^{k})]^+-x^{k}\right)^{\top} g(\tilde{y}^{k})
\end{aligned}\end{equation}
As shown in Lemma 2.1 of \cite{sun2007surrogate}, the term $[x^{k}-s^{k} g( \tilde{y}^{k})]^+-x^{k}$ must be greater than or equal to $-s^{k} g( \tilde{y}^{k})$. Therefore, 
\begin{equation}
       L\left(\tilde{y}^{k}, x^{k+1}\right)\geq L\left(\tilde{y}^{k}, x^{k}\right) - s^{k}  \|g(\tilde{y}^{k})\|^2.
\end{equation}
Substituting  $s^{k}=\gamma \cdot \frac{L(\tilde{y}^{k}, x^{k})-\bar{f}_{k}}{\left\|g( \tilde{y}^{k})\right\|^2}$, we get 
\begin{equation}\begin{aligned}
L\left(\tilde{y}^{k}, x^{k+1}\right) \geq &L\left(\tilde{y}^{k}, x^{k}\right) - \gamma\left( L(\tilde{y}^{k}, x^{k})-\bar{f}_{k}\right)\\
=&(1-\gamma) \cdot L\left(\tilde{y}^{k}, x^{k}\right) + \gamma\bar{f}_{k}.
\end{aligned}\end{equation}
Due to the satisfaction of the SOC \eqref{Sorrogate_Optimality_Condition}, we have 
\begin{equation}
L\left(\tilde{y}^{k+1}, x^{k+1}\right) >
(1-\gamma) \cdot L\left(\tilde{y}^{k}, x^{k}\right) + \gamma\bar{f}_{k}.
\end{equation}
Since $ 0<\gamma<1$, the right-hand side represents a convex combination. If $L(\tilde{y}^{k}, x^{k})>\bar{f}_{k}$, then $L(\tilde{y}^{k+1}, x^{k+1})>\bar{f}_{k}$. Since this is true for $k=0$, it follows inductively that $L(\tilde{y}^{k}, x^{k}) > \bar{f}_k$ for all $k \in \{0,\dots,K\}$. The satisfaction of inequality \eqref{approximation_subgradient_inequality} can be established in the same way as in the proof of Proposition \ref{Prop_addtive_appro_subgradient}. The proof is complete. 
\end{proof}

\section{Numerical Results} 
\label{section 4}
In this section, multiple convex optimization problems of various sizes and characteristics are tested. Across all the instances tested, $\Bar{\gamma}$ is set to 1 and $\gamma$ is set to 0.5, without fine-tuning. The feasibility of a PSVD problem is checked using the default LP solver provided by IBM CPLEX. The testing platform is a laptop with an Intel i7-13700H, 32GB of RAM, Windows 11, MATLAB 2023b, and CPLEX 12.10. The datasets and the code implementing our approach are available upon request. 

\subsection{$L_1$ Approximation Problems}
\label{subsection 4.1}

In this subsection, we consider the classical $L_1$ approximation problem (see Chapter 6.1 of \cite{boyd2004convex}), which can be formulated as:

\begin{equation}\begin{aligned}\label{L1_norm}
& \underset{x \in \mathbb{R}^N }{\min}
& & \|A x-b\|_1:=\sum_{m=1}^{M}|A_m x-b_m|, \\
\end{aligned}\end{equation}
where $A \in \mathbb{R}^{M \times N}$, $b \in \mathbb{R}^{M}$, $A_m$ denotes the $m$-th row of $A$, and $b_m$ denotes the $m$-th element of $b$. One can readily derive that a subgradient at $x^k$ is 
\begin{equation}g^k=A^{\top} z^k,\end{equation}
where $z^k$ is calculated as
\begin{equation}
    z_m^k= \begin{cases}\operatorname{sign}\left(A_m x^k-b_m\right) & \text { if } A_m x^k-b_m \neq 0; \\ 0 & \text { if } A_m x^k-b_m=0.\end{cases}
\end{equation}

We consider an instance with $M=500$ and $N=100$, where the elements of $A$ are randomly generated using a continuous uniform distribution $\mathcal{U}[-1,1]$. For convenience, we set $b$ to be a zero vector, so that the optimal solution is the zero vector and the optimal value is zero. 
We first solve the instance using the subgradient method with PSADLA described in Section \ref{section 2}. The initial solution is randomly sampled from the uniform distribution $\mathcal{U}[-10, 10]$, and the initial level value is set to $-1000$. These initial conditions are chosen to be far from the optimal solution. The results are shown in red in Fig. \ref{L1_sub1}. After 103 iterations, the level value converges to within 10 units of the optimal value. By iteration 90, a solution lies within an $L_2$ distance of at most 0.01 from the optimal solution. We also compare the new method with the SDD-based level adjustment approach \cite{bragin2022surrogate}; the corresponding results are shown in blue in Fig.~\ref{L1_sub1}. The PSVD-based PSADLA method proposed in this paper exhibits consistently superior performance compared to the SDD-based approach.

\begin{figure}[h!]
    \centering
    \begin{subfigure}[b]{0.7\textwidth}
        \includegraphics[width=\textwidth]{./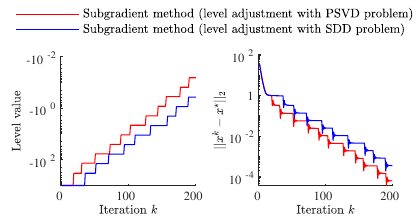}
        \caption{Comparison between PSVD-based and SDD-based level adjustment methods}
        \label{L1_sub1}
    \end{subfigure}
    \hfill
    \begin{subfigure}[b]{0.7\textwidth}
        \includegraphics[width=\textwidth]{./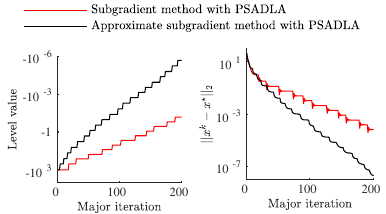}
        \caption{Comparison between subgradient and approximate subgradient methods}
        \label{L1_sub2}
    \end{subfigure}
    \caption{Results of solving the $L_1$ approximation problem}
    \label{L1_norm_surrogate_subgradient_results}
\end{figure}

The approximate subgradient method introduced in Section \ref{section 3} is also tested using the same instance as above, with a total of $M=500$ objective components. We set $\varepsilon$ in Condition \ref{condition on Fk} to  $10^{-10}$. Every 50 components are grouped, and the subgradient of a group is calculated per iteration to obtain an approximate subgradient. The numerical results show that Condition \ref{condition on Fk} is always satisfied after computing the subgradients for each group. Hence, after computing the subgradient for all the components once, the solution undergoes ten updates. Upon termination, the algorithm obtains a level value within $10^{-6}$ units of the optimal value and a solution within an $L_2$ distance of no greater than $2 \times 10^{-8}$ from the optimal solution. For fair comparison with the subgradient method, we define a ``major iteration" as one in which the subgradients of all objective components are calculated exactly once. The results are shown in black in Fig. \ref{L1_sub2}. As illustrated, the approximate subgradient method requires fewer major iterations to obtain solutions of comparable quality.

\subsection{Dual Problems of Generalized Assignment Problems}\label{subsection 4.2.1}

In this example, we consider the dual problem of the generalized assignment problem \cite{cattrysse1992survey}. The generalized assignment problem involves assigning a set of jobs $\mathcal{J}$ to a set of machines $\mathcal{M}$ to minimize $\sum_{j\in\mathcal{J}}\sum_{m\in\mathcal{M}} c_{j, m} y_{j,m}$, subject to assignment constraints $\sum_{m\in\mathcal{M}} y_{j, m}=1, \forall j\in\mathcal{J}$ and capacity constraints $\sum_{j\in\mathcal{J}} t_{j, m} y_{j, m} \leq T_m, \forall m\in\mathcal{M}$, where $\{y_{j,m}\}$ are binary decision variables, $c_{j,m}$ is the cost of assigning job $j$ to machine $m$, $t_{j,m}$ is the time required to process job $j$ on machine $m$, and $T_m$ is the capacity of machine $m$. The dual problem to be optimized is
\begin{equation}\begin{aligned}
& \underset{x \in \mathbb{R}^n}{\max}
& & f(x), 
& \text{s.t.}
& & x \geq 0,
\label{GAP_dual_problem_relax_capacity}\end{aligned}\end{equation}
where $f(x)=\inf _{y \in \mathcal{Y}_1}L(x,y)$ is the dual function with the Lagrangian function
\begin{equation}
    L(x,y)=\sum_{j\in\mathcal{J}} \sum_{m\in\mathcal{M}} c_{j, m} y_{j, m}+\sum_{m\in\mathcal{M}} x_m\left(\sum_{j\in\mathcal{J}} t_{j, m} y_{j, m}-T_m\right)
\end{equation} 
and $\mathcal{Y}_1=\left\{y\in\{0,1\}^{|\mathcal{J}|\times|\mathcal{M}|} : \sum_{m\in\mathcal{M}} y_{j, m}=1, \forall j \in \mathcal{J}\right\}$. The dual function $f$ is a piece-wise linear non-smooth function. To compute a subgradient at a given point $x^k$, $L(x^k,y)$ is exactly optimized subject to $\mathcal{Y}_1$. 
Since the dual problem \eqref{GAP_dual_problem_relax_capacity} is a maximization problem, the following modifications to our method are required: the iterate becomes $x^{k+1}=[x^k + s^k g^k]^+$, the level value becomes an over-estimate of the optimal dual value, and the stepsize is calculated as $s^k=\gamma \cdot (\bar{f}_k-f(x^k))/\|g^k\|^2$. 

\begin{figure}[t]
    \begin{center}
        \includegraphics[width=13.0 cm]{./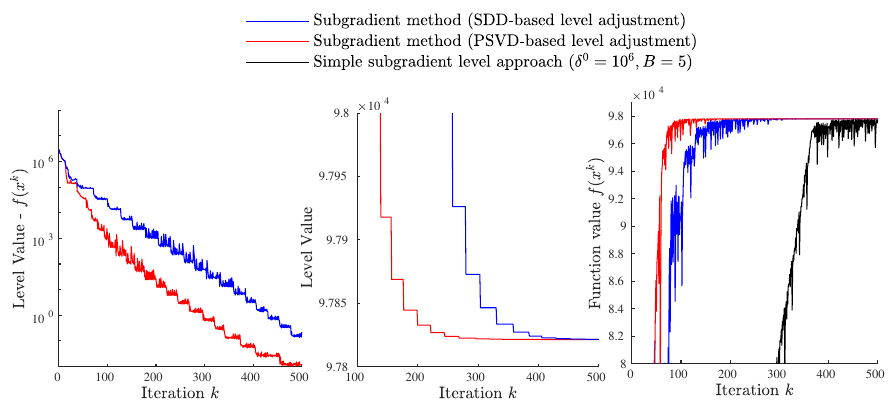}
    \end{center}
    \caption{Results of solving the dual of GAP d201600: level values and objective function values across iterations}
    \label{results of d201600}
\end{figure} 

We examine three standard test instances, d201600, d401600, and d801600, from the OR Library \cite{beasley1990or}. The first instance tested is d201600, which consists of 1600 jobs and 20 machines. 
We first solve the instance by using the subgradient method with our PSVD-based level adjustment approach. To test the robustness of the proposed method, the initial conditions are intentionally set far from optimality. Specifically, the initial level value is set to 500,000, and the initial solution $x^0$ is randomly sampled from the uniform distribution $\mathcal{U}[0, 100]$. The algorithm terminates when the total number of iterations reaches 500. In our approach, the level values are upper bounds to the optimal dual value, whereas the dual values are lower bounds, and the difference can be used as a measure of solution quality. As shown in the left sub-figure of Fig. \ref{results of d201600}, this difference decreases to $10^{-2}$, demonstrating the achievement of high-quality solutions. To provide a clearer visualization, the level values and objective function values $f(x^k)$ are also plotted in Fig. \ref{results of d201600}. For comparison, we also test the level adjustment approach based on the SDD problem of \cite{bragin2022surrogate}. The results obtained using the SDD-based approach are inferior to those of the proposed method.

\begin{table}[htbp]
    \centering 
    \caption{Results of solving the dual of GAP d201600 and d401600: comparison between multiple stepsize rules} 
    \label{Comparison_of_rules_d201600_d401600}
    {\scriptsize
    \begin{tabular}{ccccccccc}  
        \toprule 
        Method& Param & \multicolumn{2}{c}{d201600}& \multicolumn{2}{c}{d401600} \\
         \cmidrule(lr){3-4} \cmidrule(lr){5-6} 
         &  &  $x^0: 0$& $x^0: 100$& $x^0: 0$&$x^0: 100$\\
        \midrule 
         \multirow{3}{*}{\makecell{Ours}}
        &1e5&  12/36/59&32/44/73&16/79/179&66/123/220\\
        &2e5& 61/78/109&53/76/110&99/151/256&86/138/249 \\
        &5e5&  77/93/114&68/92/125&112/184/266&110/148/251\\
        \midrule 
        \multirow{3}{*}{\makecell{SDD-based\\level adjustment}}
        &1e5& 12/36/100&32/63/120&16/131/262&66/172/308\\
        &2e5&  97/149/197&128/160/215&207/297/444&238/339/482 \\
        &5e5& 156/205/262&155/205/257&210/343/494&272/409/563 \\
        \midrule 
        \multirow{5}{*}{\makecell{Path-based\\level adjustment}}
        &5e4,1 & 21/30/80&-/-/-&-/-/-&-/-/-  \\
        &5e4,5 &  41/117/232&-/-/-&629/636/-&-/-/- \\
        &5e4,10 & 62/170/439&-/-/-&629/636/-&-/-/-  \\
        &5e4,50 &  279/842/-&-/-/-&159/628/-&-/-/-  \\
        &5e4,100 & 542/-/-&-/-/-&81/-/-&-/-/- \\
        \midrule 
        \multirow{5}{*}{\makecell{Path-based\\level adjustment}}
        &1e5,1 &  13/31/78&-/-/-&-/-/-&-/-/- \\
        &1e5,5 &   37/96/244&-/-/-&629/636/-&-/-/-  \\
        &1e5,10 &  61/172/277&-/-/-&315/320/-&-/-/-  \\
        &1e5,50 &   174/859/-&-/-/-&159/628/-&-/-/-  \\
        &1e5,100 &  328/-/-&-/-/-&43/-/-&-/-/-  \\
        \midrule 
        \multirow{5}{*}{\makecell{Path-based\\level adjustment}}
        &5e5,1 &  72/81/123&-/-/-&-/-/-&-/-/-  \\
        &5e5,5 &  44/88/309&-/-/-&-/-/-&-/-/- \\
        &5e5,10 &  71/165/404&-/-/-&504/509/-&-/-/-  \\
        &5e5,50 &  299/765/-&-/-/-&127/-/-&-/-/-  \\
        &5e5,100 & 537/-/-&-/-/-&65/-/-&-/-/- \\
        \midrule 
        \multirow{5}{*}{\makecell{Path-based\\level adjustment}}
        &1e6,1 &   17/30/73&-/-/-&-/-/-&-/-/-  \\
        &1e6,5 &   44/87/201&-/-/-&504/509/-&-/-/-  \\
        &1e6,10 &  64/167/587&-/-/-&-/-/-&-/-/- \\
        &1e6,50 &   286/767/-&-/-/-&127/-/-&-/-/-  \\
        &1e6,100 &  542/-/-&-/-/-&65/-/-&-/-/-  \\
        \midrule 
        \multirow{7}{*}{\makecell{$s_k=\frac{a}{\sqrt{k}}$}}
        & e-6 &   -/-/-&-/-/-&-/-/-&-/-/-  \\
        & e-5 &   177/200/263&-/-/-&590/637/643&-/-/- \\
        & e-4 &   25/75/139&-/-/-&17/80/-&-/-/- \\
        & e-3 &    510/-/-&-/-/-&437/-/-&-/-/- \\
        & e-2 &   -/-/-&-/-/-&-/-/-&-/-/-  \\
        & e-1 &   -/-/-&-/-/-&-/-/-&-/-/- \\
        & e0 &   -/-/-&-/-/-&-/-/-&-/-/-  \\
        \midrule 
        \multirow{7}{*}{\makecell{$s_k=\frac{a}{k+b}$}}
        & e-6,0 &   -/-/-&-/-/-&-/-/-&-/-/- \\
        & e-5,0 &  -/-/-&-/-/-&-/-/-&-/-/-  \\
        & e-4,0 &  15/35/41&-/-/-&55/66/74&-/-/- \\
        & e-3,0 &   59/91/117&-/-/-&147/202/718&-/-/-  \\
        & e-2,0 &    442/864/-&234/871/-&407/874/-&486/926/-  \\
        & e-1,0 & -/-/-&-/-/-&-/-/-&-/-/- \\
        \midrule 
        \multirow{7}{*}{\makecell{$s_k=\frac{a}{k+b}$}}
        & e-6,10 &   -/-/-&-/-/-&-/-/-&-/-/- \\
        & e-5,10 &   -/-/-&-/-/-&-/-/-&-/-/-  \\
        & e-4,10 &   120/144/223&-/-/-&-/-/-&-/-/- \\
        & e-3,10 &   28/79/112&-/-/-&29/91/665&-/-/- \\
        & e-2,10 &   414/879/-&377/851/-&342/969/-&-/-/-  \\
        & e-1,10 &   -/-/-&-/-/-&-/-/-&-/-/- \\
        \midrule 
        \multirow{7}{*}{\makecell{$s_k=\frac{a}{k+b}$}}
        & e-6,100 &  -/-/-&-/-/-&-/-/-&-/-/- \\
        & e-5,100 & -/-/-&-/-/-&-/-/-&-/-/- \\
        & e-4,100 &  -/-/-&-/-/-&-/-/-&-/-/- \\
        & e-3,100 &  30/33/36&-/-/-&62/68/574&-/-/-  \\
        & e-2,100 &   248/784/-&-/-/-&104/800/-&-/-/-  \\
        & e-1,100 &   -/-/-&-/-/-&-/-/-&-/-/- \\
        \bottomrule 
    \end{tabular}

    }
\end{table}

\begin{table}[htbp]
    \centering 
    \caption{Results of solving the dual of GAP d801600: comparison between multiple stepsize rules} 
    \label{Comparison_of_rules_d801600}
    {\scriptsize
    \begin{tabular}{cccc}  
        \toprule 
        Method& Param & \multicolumn{2}{c}{d801600}\\
         \cmidrule(lr){3-4} 
         &  &  $x^0: 0$& $x^0: 100$\\
        \midrule 
         \multirow{3}{*}{\makecell{Ours}}
        &1e5&  21/195/358&151/276/433\\
        &2e5& 174/281/446&129/231/395  \\
        &5e5&  198/300/525&145/306/473 \\
        \midrule 
        \multirow{3}{*}{\makecell{SDD-based level adjustment}}
        &1e5& 21/298/491&151/411/604 \\
        &2e5&  387/579/846&425/615/886  \\
        &5e5& 525/793/982&520/790/978\\
        \midrule 
        \multirow{5}{*}{\makecell{Path-based level adjustment}}
        &5e4,1 & -/-/-&-/-/-  \\
        &5e4,5 &  -/-/-&-/-/-\\
        &5e4,10 & -/-/-&-/-/- \\
        &5e4,50 &  332/337/-&-/-/- \\
        &5e4,100 & 167/-/-&-/-/-\\
        \midrule 
        \multirow{5}{*}{\makecell{Path-based level adjustment}}
        &1e5,1 &  -/-/-&-/-/- \\
        &1e5,5 &   -/-/-&-/-/- \\
        &1e5,10 &  -/-/-&-/-/-  \\
        &1e5,50 &   332/337/-&-/-/-  \\
        &1e5,100 &  167/-/-&-/-/-  \\
        \midrule 
        \multirow{5}{*}{\makecell{Path-based level adjustment}}
        &5e5,1 &  -/-/-&-/-/- \\
        &5e5,5 &  -/-/-&-/-/- \\
        &5e5,10 &  -/-/-&-/-/- \\
        &5e5,50 &  266/-/-&-/-/- \\
        &5e5,100 & 135/-/-&-/-/-\\
        \midrule 
        \multirow{5}{*}{\makecell{Path-based level adjustment}}
        &1e6,1 &   -/-/-&-/-/-  \\
        &1e6,5 &   -/-/-&-/-/-  \\
        &1e6,10 &  -/-/-&-/-/-\\
        &1e6,50 &   266/-/-&-/-/- \\
        &1e6,100 &  135/-/-&-/-/- \\
        \midrule 
        \multirow{7}{*}{\makecell{$s_k=\frac{a}{\sqrt{k}}$}}
        & e-6 &   -/-/-&-/-/-    \\
        & e-5 &  -/-/-&-/-/- \\
        & e-4 &  30/282/-&-/-/-\\
        & e-3 &   933/-/-&-/-/- \\
        & e-2 &  -/-/-&-/-/- \\
        & e-1 &  -/-/-&-/-/-\\
        & e0 &   -/-/-&-/-/- \\
        \midrule 
        \multirow{7}{*}{\makecell{$s_k=\frac{a}{k+b}$}}
        & e-6,0 & -/-/-&-/-/-\\
        & e-5,0 & -/-/-&-/-/-  \\
        & e-4,0 &  -/-/-&-/-/- \\
        & e-3,0 &  -/-/-&-/-/- \\
        & e-2,0 &  -/-/-&-/-/- \\
        & e-1,0 & -/-/-&-/-/-\\
        \midrule 
        \multirow{7}{*}{\makecell{$s_k=\frac{a}{k+b}$}}
        & e-6,10 &  -/-/-&-/-/-\\
        & e-5,10 &  -/-/-&-/-/- \\
        & e-4,10 &  -/-/-&-/-/-\\
        & e-3,10 &  25/168/-&-/-/-\\
        & e-2,10 & -/-/-&-/-/- \\
        & e-1,10 &  -/-/-&-/-/-\\
        \midrule 
        \multirow{7}{*}{\makecell{$s_k=\frac{a}{k+b}$}}
        & e-6,100 & -/-/-&-/-/- \\
        & e-5,100 & -/-/-&-/-/- \\
        & e-4,100 &  -/-/-&-/-/-\\
        & e-3,100 &  150/165/-&-/-/- \\
        & e-2,100 &  177/-/-&-/-/- \\
        & e-1,100 &   -/-/-&-/-/-\\
        \bottomrule 
    \end{tabular}

    }
\end{table}

To more clearly demonstrate the advantages of our level adjustment approach, in the following, we will define a metric to empirically compare the convergence rate of our approach and several commonly used  stepsize rules for subgradient methods. Specifically, we consider the metric that the number of iterations required for an approach to obtain a solution whose objective function value is within 1\%, 0.5\%, and 0.1\% of the optimal value for the first time. Using the metric, we compare our approach  with the following approaches:
\begin{itemize}
    \item Subgradient method with the Polyak stepsize and the SDD-based level adjustment \cite{bragin2022surrogate};
    \item Subgradient method with the Polyak stepsize and the Path-based level adjustment \cite{goffin1977convergence};
    \item Subgradient method with the diminishing stepsize $s_k=a/\sqrt{k}, a>0$;
    \item Subgradient method with the square summable but not summable stepsize $s_k=a/(k+b), a>0, b\geq0$.
\end{itemize}

The known optimal values for instances d201600, d401600, and d801600 are 97,821.35, 97,105, and 97,034, respectively. To evaluate performance, we test two initial solutions: a zero vector and a vector where all elements are set to 100. The results corresponding to these initial solutions are presented under the columns labeled ``$x^0: 0$" and ``$x^0: 100$" in Table \ref{Comparison_of_rules_d201600_d401600} and Table \ref{Comparison_of_rules_d801600}. 

Our PSVD-based level adjustment approach is first tested with initial level values of $1 \times 10^{5}$, $2 \times 10^{5}$, and $5 \times 10^{5}$. The number of iterations required to obtain solutions of the required quality is listed in Table \ref{Comparison_of_rules_d201600_d401600} and Table \ref{Comparison_of_rules_d801600}. The results demonstrate that high-quality solutions can be obtained efficiently for all three instances, and the performance of the proposed method is not sensitive to the initial settings. The SDD-based level adjustment approach from \cite{bragin2022surrogate} is also tested. As demonstrated in the tables, its performance is inferior due to the tighter formulation of the novel PSVD problem, as established in Proposition \ref{Prop 2.1}. The tighter PSVD formulation allows more frequent level adjustments, leading to faster convergence. 

Additionally, we test the path-based level adjustment method proposed in \cite{goffin1977convergence}. The approach has two hyperparameters: $\delta^0$ and $B$. The selection of $\delta^0$, which influences the level value, is highly dependent on the value of $B$. We test the method under $\delta^0 \in \{5 \times 10^4, 1 \times 10^5, 5 \times 10^5, 1 \times 10^6\}$ and $B \in \{1, 5, 10, 50, 100\}$. The results are shown in Table \ref{Comparison_of_rules_d201600_d401600} and Table \ref{Comparison_of_rules_d801600}. For certain combinations of hyperparameters, the number of iterations required exceeds 1000. In such cases, the algorithm is terminated, and the result is denoted by a dash (``-"). The results indicate that the path-based approach is highly sensitive to the choice of hyperparameters. For instance, with $\delta^0 = 1 \times 10^6$ and $B = 1$, reasonably fast convergence is achieved for instance d201600 when the initial solution is a zero vector. However, many other combinations of hyperparameters and initial settings lead to significantly degraded performance, highlighting the substantial effort required for parameter fine-tuning. 

Furthermore, we test the diminishing stepsize and the square-summable-but-not-summable stepsize. For each stepsize, there are many choices. We use two commonly referenced selections from the lecture notes of Stanford University EE364b\footnote{The lecture note can be accessed via \href{https://see.stanford.edu/materials/lsocoee364b/02-subgrad_method_notes.pdf} {https://see.stanford.edu/materials/lsocoee364b/02-subgrad\_method\_notes.pdf}}.
For the diminishing stepsize, $s_k = a/\sqrt{k} $ with $a \in \{10^{-6}$, $10^{-5}$, $10^{-4}$, $10^{-3}$, $10^{-2}$, $10^{-1}$, $10^0\}$. For the square-summable-but-not-summable stepsize, $s_k = a/(k+b)$ with $a \in \{10^{-6}$, $10^{-5}$, $10^{-4}$, $10^{-3}, 10^{-2}$, $10^{-1}\}$ and $b \in \{0, 10, 100\}$. Both stepsizes are predefined and lack the adaptability inherent in level-based methods, which limits their flexibility during algorithm execution.

\subsection{Dual Problems of Job Assignment Problems with \\Transportation Time}
In this example, a dual problem of another assignment problem is considered, and the approximate subgradient of Subsection~\ref{subsection: application for dual problems} is tested. Before formulating the dual problem to be optimized, the assignment problem is introduced briefly. The problem is to assign a set of jobs $\mathcal{I}$ to a set of machines $\mathcal{M}$, and job $i$ has a set of operations $\mathcal{J}_i=\{1,2, \dots, |\mathcal{J}_i|\}$. If the $j$-th operation of job $i$ is assigned to machine $m$, then $y_{i,j,m}$ equals 1. Otherwise, it is zero. Each operation needs to be assigned to a unique machine, i.e.,
\begin{equation}
\label{GAP_withtransportation_assignmentconstraints}
    \sum_{m \in \Phi_{(i,j)}} y_{i, j, m}=1, \quad\forall i\in\mathcal{I}, j\in\mathcal{J}_i,
\end{equation}
where $\Phi_{(i,j)}$ is the set of machines eligible to process the $j$-th operation of job $i$. Following \cite{deroussi2008simple, zhang2012genetic}, jobs need to go through a sequence of operations on multiple machines, and moving a job from one machine to another incurs transportation costs.
We use non-negative decision variable $z_{i,j}$ to denote the transportation cost of operation $(i, j)$, which satisfies
\begin{equation}
\label{GAP_withtransportation_transcost}
\begin{aligned}
    &c^{\prime}_{i,m_1,m_2}\left(y_{i, j, m_1}+y_{i, j+1, m_2}-1\right) \leq z_{i,j}, \\ 
    &\forall i\in\mathcal{I}, j=1, \ldots, |\mathcal{J}_i|-1,
    m_1 \in \Phi_{(i,j)}, m_2 \in \Phi_{(i,j+1)},
\end{aligned}
\end{equation}
where $c^{\prime}_{i,m_1,m_2}$ denotes the transportation cost required to move job $i$ from machine $m_1$ to $m_2$. Each machine has a limited capacity, i.e.,
\begin{equation}\label{GAP_withtransportation_capacityconstraints}
    \sum_{(i,j) \in \Psi_m} t_{i, j, m} y_{i, j, m} \leq T_m, \quad\forall m\in\mathcal{M},
\end{equation}
where $\Psi_m$ denotes the set of operations that can be processed by machine $m$, $t_{i, j, m}$ denotes the capacity required by machine $m$ to process ($i, j$), and $T_m$ denotes the capacity of machine $m$.
The objective is to minimize assignment and transportation costs:  
\begin{equation}
    \underset{y, z}{\min}\quad \sum_{i\in\mathcal{I}}\sum_{j\in\mathcal{J}_i} \sum_{m\in \Phi_{(i,j)}} c_{i, j, m} y_{i, j, m} + \sum_{i\in\mathcal{I}}\sum_{j=1}^{|\mathcal{J}_i|-1}z_{i,j},
\end{equation}
where $c_{i, j, m}$ is the cost of processing ($i, j$) on machine $m$. After relaxing the capacity constraints \eqref{GAP_withtransportation_capacityconstraints} by using Lagrangian multipliers $x\in\mathbb{R}^{|\mathcal{M}|}$, the relaxed problem can be written as 
\begin{equation}\label{gap_withtranspose_relaxed_problem}\begin{aligned}
& \underset{y, z}{\min}  \quad
L(y, z, x),  \quad \text{s.t.}  \quad
\eqref{GAP_withtransportation_assignmentconstraints}, \eqref{GAP_withtransportation_transcost},
\end{aligned}\end{equation}
with 
\begin{equation}\begin{aligned}
    L(y, z, x)=\sum_{i\in\mathcal{I}}\sum_{j\in\mathcal{J}_i} \sum_{m\in\mathcal{M}} c_{i, j, m} y_{i, j, m} + \sum_{i\in\mathcal{I}}\sum_{j=1}^{|\mathcal{J}_i|-1}z_{i,j}\\
    +\sum_{m\in\mathcal{M}} x_m\left(\sum_{(i,j) \in \Psi_m} t_{i, j, m} y_{i, j, m} - T_m\right).
\end{aligned}\end{equation}
The dual problem to be optimized in this example can then be established as
\begin{equation}
\begin{aligned}
& \underset{x\in\mathbb{R}^{|\mathcal{M}|}}{\max}
& & q(x)=\inf_{y,z \in \mathcal{Y}_2} L(y, z, x), \quad 
\text{s.t.}
& & x \geq 0,
\label{GAP_withtransportation_dual_problem}
\end{aligned}
\end{equation}
where $\mathcal{Y}_2$ is the feasible region defined by constraints \eqref{GAP_withtransportation_assignmentconstraints} and \eqref{GAP_withtransportation_transcost}.
We consider an instance with 40 machines and 40 jobs, with each job requiring 10 operations. The parameters are randomly generated from uniform distributions. For simplicity, we assume that each machine is eligible to process all operations. The assignment costs $\{c_{i,j,m}\}$, processing times $\{t_{i,j,m}\}$ required by operations, machine capacities $\{T_{m}\}$, and transportation costs $\{c^{\prime}_{i,m_1,m_2}\}$ are sampled from uniform distributions. Since the relaxed problem is an Integer Linear Programming problem, Branch-and-Cut implemented by IBM CPLEX is used to solve it so as to calculate subgradients and approximate subgradients. 
\begin{figure}[htb]
    \begin{center}
        \includegraphics[width=13 cm]{./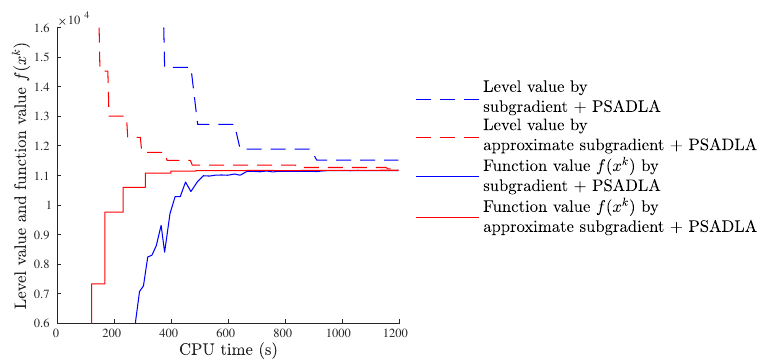}
    \end{center}
    \caption{Results of solving the dual problem \eqref{GAP_withtransportation_dual_problem}: level values and objective function values versus CPU time}
    \label{GAP_TRANS_subgradient_vs_surroagtesubgradient}
\end{figure}

The Lagrangian function in \eqref{gap_withtranspose_relaxed_problem} is additive with respect to jobs, allowing every 10 jobs to be grouped to form a subproblem. The initial level value is set to 20,000, significantly greater than $f^{\star}$. The initial decision variable $x^0$ is sampled from the uniform distribution $\mathcal{U}[0, 100]$. The stopping criterion is set to 1200 seconds, including the time for initialization, subgradient and solution computations, and level adjustment. The results are presented in Fig. \ref{GAP_TRANS_subgradient_vs_surroagtesubgradient}.
Before reaching the stopping criterion, the subproblem solutions are updated 305 times, with an average computation time of 3.82 seconds required to obtain approximate subgradients. Due to the linearity of the PSVD problem in \eqref{PSVD for surrogate subgradient}, feasibility checks are computationally efficient, requiring only 0.06 seconds on average. As a result, high-quality solutions are obtained in a computationally efficient manner. 
For comparison, the subgradient method with PSADLA is also tested, and the results are included in Fig. \ref{GAP_TRANS_subgradient_vs_surroagtesubgradient}. Since solving the relaxed problem \eqref{gap_withtranspose_relaxed_problem} to optimality is necessary to compute a subgradient, the solution is updated only 78 times before reaching the stopping criterion. Consequently, the subgradient approach is slower than the approximate subgradient approach due to the higher computational cost per iteration.

\section{Conclusions}\label{section 5}
In this paper, we develop a Polyak Stepsize with Accelerated Decision-Guided Level Adjustment (PSADLA) for subgradient optimization and provide a rigorous proof of the algorithm's convergence. Testing results across various convex problems with distinct characteristics show that the level value is efficiently adjusted, leading to fast convergence. Furthermore, we show that with PSADLA, obtaining approximate subgradients at each iteration leads to significant CPU time savings while preserving fast convergence. In the following, we discuss the potential limitations of our approach and highlight promising directions for future research.

In the following, we summarize the limitations of our approach. One limitation of our approach lies in the need for periodic feasibility checks of the PSVD problem. In high-dimensional settings, such as those arising in deep neural network training, storing gradients can be memory-intensive, and feasibility checks of the PSVD problem may become computationally expensive. Improving the efficiency of the PSVD feasibility check for extremely large-scale problems (e.g., deep neural network optimization with billions of parameters) is an important avenue for future research. Another challenge occurs in ill-conditioned problems, such as the dual problems of some IP problems. In such cases, the subgradient directions may become almost perpendicular to the direction toward optimal solutions, leading to zigzagging and slow convergence. Integrating our approach with heavy-ball techniques may help mitigate this issue. Finally, machine learning techniques offer a promising avenue for further development. Learning-based predictions of optimal solutions and generating additional constraints to tighten the PSVD problem could enable more frequent level updates, accelerating convergence.

We outline several potential directions for future research in the following. Non-convex problems have significant practical applications and have recently garnered considerable attention. Algorithms for solving non-convex optimization problems often rely on establishing and solving a series of convex approximation problems \cite{razaviyayn2014parallel}. A promising research direction involves integrating our approach with the Successive Convex Approximation (SCA) framework \cite{razaviyayn2014parallel} to solve these convex approximations efficiently. Specifically, the level value obtained from solving one convex approximation could be utilized to warm-start the solution process for subsequent approximations. Moreover, extending our PSVD-based level adjustment method to stochastic gradient settings represents another promising avenue for further exploration. Such extensions could broaden the approach's applicability to large-scale non-convex problems where stochastic methods are commonly employed.

% \bibliographystyle{spmpsci}
% \bibliography{references.bib}
\end{document}